\documentclass[12pt]{amsart}
\usepackage[utf8]{inputenc}
\usepackage[T1]{fontenc}
\usepackage[T1]{fontenc}

\usepackage[leqno]{amsmath}
\usepackage{amssymb}
\usepackage{accents}
\usepackage{graphicx}
\usepackage{mathrsfs}
\usepackage{comment, float}
\usepackage[dvipsnames]{xcolor}
\usepackage[foot]{amsaddr}
\usepackage[shortlabels]{enumitem}
\setlist[itemize]{leftmargin=11mm}
\setlist[enumerate]{leftmargin=11mm}
\usepackage{mathdots}
\usepackage[a4paper, footskip=0.5in,  headheight = 0.5in, top=1.25in, bottom=1.25in,  right=1in,  left=1in]{geometry}
\usepackage{hyperref}
\hypersetup{
colorlinks,
linkcolor=black,
urlcolor=Blue,
citecolor=black,}
\usepackage[capitalise,nameinlink]{cleveref}
\usepackage[center]{caption}
\usepackage{eufrak}
\usepackage{marvosym}     
\usepackage{latexsym}
\usepackage[nodayofweek]{datetime}
\usepackage{tikz}
\usepackage{subfig}
\usepackage{thm-restate}
\usepackage{verbatim}
\usepackage[none]{hyphenat} 
\usepackage{cite}

\newtheorem{statement}{Statement}[section]
\newtheorem{theorem}[statement]{Theorem}
\newtheorem{lemma}[statement]{Lemma}

     \DeclareMathOperator{\clx}{<_{{\text{\sf{clex}}}}}
 \DeclareMathOperator{\clxe}{\leq_{{\text{\sf{clex}}}}}
 
 \usepackage{stmaryrd}
\newcommand{\yl}{\hspace{-0.3mm}\mathrel{\scalebox{1.25}{$\Yleft$}}\hspace{-0.3mm}}

\newcommand{\mca}{\mathcal}
\newcommand{\poi}{\mathbb{N}} 
\newcommand{\ol}{\overline}
\newcommand{\eps}{\varepsilon}
\newcommand{\set}[2]{\{#1,\ldots, #2\}}

\newcounter{tbox}
\newcommand{\sta}[1]{\medskip\medskip\refstepcounter{tbox}\noindent{\parbox{\textwidth}{(\thetbox) \emph{#1}}}\vspace*{0.3cm}}
\newcommand{\mylongtitle}[1]{%
  \ifodd\value{page}%
    \protect\parbox{0.97\linewidth}{#1}\hfill%
  \else%
    \hfill\protect\parbox{0.97\linewidth}{#1}%
  \fi%
}

\title{Forcing monochromatic induced subgraphs}

\author{Sepehr Hajebi$^{\dagger}$}

\author{Sophie Spirkl$^{\dagger \ast}$}
\thanks{$^{\dagger}$ Department of Combinatorics and Optimization, University of Waterloo, Waterloo, Ontario, Canada.}
\thanks{$^{\ast}$ We acknowledge the support of the Natural Sciences and Engineering Research Council of Canada (NSERC), [funding reference number RGPIN-2020-03912].
Cette recherche a \'et\'e financ\'ee par le Conseil de recherches en sciences naturelles et en g\'enie du Canada (CRSNG), [num\'ero de r\'ef\'erence RGPIN-2020-03912]. This project was funded in part by the Government of Ontario. This research was conducted while Spirkl was an Alfred P. Sloan Fellow. This research was undertaken, in part, thanks to funding from the Canada Research Chairs Program.}
\date{\today}

\begin{document}

\maketitle
\begin{abstract}
We prove that for all $c\in\mathbb N$ and nonnull graphs $H_1,\ldots,H_t$, there exists $n\in\mathbb N$ such that if $G$ is a $c$-edge-colored complete graph with no monochromatic induced copy of the complete join of $H_1,\ldots,H_t$, then $V(G)$ is the union of $n$ sets $V_1,\ldots,V_n$ such that within each set $V_j$ with $|V_j|\neq 1$, the edges of some color form a graph that excludes at least one of $H_1,\ldots,H_t$ as an induced subgraph. In fact, the same holds even if the colors overlap, and with a different list of graphs $H_1,\ldots,H_t$ assigned to each color. When $H_1,\ldots,H_t$ each have a single vertex, this is Ramsey's theorem, and when $c=2$, this is the ``excluding pairs of graphs'' theorem of Chudnovsky, Scott, and Seymour.
\end{abstract}

\section{Introduction}

In this paper, $\poi$ is the set of all positive integers. A graph $G$ consists of a finite vertex set $V(G)$ and an edge set $E(G)$ with no ``loops'' or ``parallel edges''. For $X\subseteq V(G)$, $G[X]$ is the subgraph of $G$ induced by $X$. A \textit{copy} of a graph $H$ in $G$ is an {\sl induced} subgraph of $G$ isomorphic to $H$, and we say that $G$ is \textit{$H$-free} if there is no copy of $H$ in $G$. 

Ramsey's classical theorem \cite{multiramsey} asserts that sufficiently large  edge-colored complete graphs contain large monochromatic complete subgraphs. (A proof for $c^{c(t-1)}$ is included in the appendix. The best known bound improves this by a factor of $e^{O_{c}(t)}$ \cite{bestmultiramsey}.)

\begin{theorem}[Ramsey \cite{multiramsey}]\label{thm:multiramsey}
For all $c,t\in \mathbb N$ with $c\geq 2$ and every integer $n\geq c^{c(t-1)}$, every $c$-coloring of $E(K_n)$ contains a monochromatic copy of $K_t$.
\end{theorem}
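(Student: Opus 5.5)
We prove this by the standard greedy ``neighborhood-chasing'' argument, which produces a monochromatic clique from a long sequence of vertices where each vertex sends a single color to all later ones. Fix $c\geq 2$ and $t\in\mathbb N$. If $t=1$ the statement is trivial, since $n\geq 1$ and a single vertex is a monochromatic $K_1$. So assume $t\geq 2$, let $n\geq c^{c(t-1)}$, and fix a $c$-coloring of $E(K_n)$.

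We build vertices $v_1,v_2,\ldots$ and nested sets $V(K_n)=S_0\supseteq S_1\supseteq\cdots$ as follows. Given $S_{i-1}\neq\emptyset$, pick any $v_i\in S_{i-1}$. Partition $S_{i-1}\setminus\{v_i\}$ according to the color of the edge to $v_i$. Let $S_i$ be a largest part, and let $\phi(i)$ be its color. Then
\[
|S_i|\geq \frac{|S_{i-1}|-1}{c}.
\]
By construction, every edge $v_iv_j$ with $i<j$ has color $\phi(i)$.

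We continue until we have produced $m=c(t-1)+1$ vertices. By the pigeonhole principle, some color $\alpha$ then satisfies $\phi(i)=\alpha$ for at least $t$ indices $i$. Strictly, only the first $m-1=c(t-1)$ indices need a defined color. Among those, some color occurs at least $t-1$ times. Adding $v_m$, which is joined in color $\phi(i)$ to every earlier $v_i$, gives $t$ vertices in which every pair has color $\alpha$. This uses only that each edge $v_iv_j$ with $i<j$ has color $\phi(i)$.

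The one step needing care is the size bookkeeping: we must check that $S_{i-1}$ is still nonempty when we want to choose $v_i$ for all $i\leq m$. It is cleanest to prove by induction that $|S_i|\geq c^{c(t-1)-i}-1$, or something slightly stronger. From $|S_i|\geq (|S_{i-1}|-1)/c$ and $|S_0|=n\geq c^{k}$ with $k=c(t-1)$, the inductive step gives
\[
|S_i|\geq \frac{c^{k-i+1}-2}{c}.
\]
Since $|S_i|$ is an integer, this means $|S_i|\geq c^{k-i}-1$ whenever $c^{k-i}\geq 1$, using $c\geq 2$ to absorb the rounding.

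A cleaner route is the invariant $|S_i|\geq c^{k-i}$ for $i<k$. This holds for $i=0$. Assuming $|S_{i-1}|\geq c^{k-i+1}$, we get
\[
|S_i|\geq \left\lceil \frac{c^{k-i+1}-1}{c}\right\rceil = c^{k-i}
\]
because $c^{k-i+1}-1>c^{k-i+1}-c$. So $|S_{k-1}|\geq c\geq 1$, and $S_k$ is nonempty as well, since $|S_k|\geq \lceil (c-1)/c\rceil=1$. Hence $v_1,\ldots,v_{k+1}$ all exist with $k+1=m$, and the pigeonhole step above finishes the proof.

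The main obstacle is exactly this off-by-one accounting of the ceiling in $\lceil(|S_{i-1}|-1)/c\rceil$. The argument wastes one factor of $c$ relative to the true bound $c^{c(t-1)-1}$-ish, and that slack is what makes the invariant go through.
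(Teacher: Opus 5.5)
Your proof is correct and is essentially the paper's argument: the paper builds the same kind of vertex sequence by peeling off the last vertex $v_p$ and recursing into a monochromatic neighborhood of size at least $c^{q-2}$, so each vertex receives a single color from all earlier vertices (the mirror image of your forward construction). It uses the same bookkeeping $p-1\geq c^{q-1}-1>c(c^{q-2}-1)$ and the same pigeonhole over $c(t-1)+1$ vertices. Your closing remark about the slack is slightly off, since the greedy argument already works from $n\geq c^{c(t-2)+1}$, but this does not affect the proof.
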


For $c=2$, this is equivalent to the following (where $\ol{G}$ denotes the complement of $G$):

\begin{theorem}[Ramsey \cite{multiramsey}]\label{thm:graphramsey}
For all $t\in \mathbb N$, if $G$ is a graph such that both $G$ and $\ol{G}$ are $K_t$-free, then $|V(G)|< 4^{t-1}$.
\end{theorem}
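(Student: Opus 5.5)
The plan is to deduce \cref{thm:graphramsey} directly from \cref{thm:multiramsey} with $c=2$, and to observe that the two statements are really the same. Given a graph $G$, colour each edge of the complete graph on $V(G)$ red if it is an edge of $G$ and blue if it is an edge of $\ol{G}$; every pair of distinct vertices receives exactly one colour, so this is a $2$-colouring of $E(K_n)$ with $n=|V(G)|$. A red copy of $K_t$ is then precisely a set of $t$ vertices inducing a clique in $G$, that is, a copy of $K_t$ in $G$. Likewise a blue copy of $K_t$ is a copy of $K_t$ in $\ol{G}$ (equivalently, a stable set of size $t$ in $G$). Since $K_t$ is complete, a monochromatic complete subgraph on $t$ vertices is automatically an induced copy, so the notion of ``copy'' used in the paper causes no difficulty.

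Now suppose $G$ and $\ol{G}$ are both $K_t$-free, and suppose for contradiction that $|V(G)|\geq 4^{t-1}$. For $c=2$ we have $c^{c(t-1)}=2^{2(t-1)}=4^{t-1}$. So \cref{thm:multiramsey} applies to the colouring above and yields a monochromatic $K_t$, which is a copy of $K_t$ in $G$ or in $\ol{G}$. This contradiction gives $|V(G)|<4^{t-1}$.

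Two loose ends remain. First, \cref{thm:multiramsey} is stated for all $t\in\poi$, so $t=1$ is included. In that case $4^{0}=1$, and a graph with at least one vertex contains $K_1$ in both $G$ and $\ol{G}$, so the only graph satisfying the hypothesis is the null graph, consistent with $|V(G)|<1$. Second, the converse direction explains the word ``equivalent'': any $2$-colouring of $E(K_n)$ defines a graph $G$ on $[n]$ whose edges are the red pairs, and then $\ol{G}$ consists of the blue pairs. There is no real obstacle here; the only point needing care is checking that the exponent bound specialises exactly to $4^{t-1}$ and that the $t=1$ and null-graph cases are consistent.
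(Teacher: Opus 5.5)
Your proposal is correct and follows the paper's route: the paper presents this statement as exactly the $c=2$ case of \Cref{thm:multiramsey}, via the colouring of pairs by whether they are edges of $G$ or of $\ol{G}$ and the identity $2^{2(t-1)}=4^{t-1}$. Your extra checks of the $t=1$ case and of the converse direction are consistent with the paper.
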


Another now-classical result, the ``excluding pairs of graphs'' theorem of Chudnovsky, Scott, and Seymour \cite{expairs}, extends \Cref{thm:graphramsey} to excluding copies of arbitrary ``complete joins'' in both $G$ and $\ol{G}$, showing that in this case $V(G)$, although perhaps not small, is the union of a small number of sets each inducing a ``simple'' subgraph of $G$. Recall that the \textit{complete join} of graphs $H_1,\ldots, H_t$, where $t\in \mathbb N$, is the graph $H$ whose vertex set is the union of $t$ pairwise disjoint sets $X_1,\ldots, X_t$ such that for every $i\in \{1,\ldots,t\}$, $H[X_i]$ is isomorphic to $H_i$, and for all distinct $i,j\in \{1,\ldots,t\}$, every vertex in $X_i$ is adjacent in $H$ to every vertex in $X_j$.

\begin{theorem}[Chudnovsky, Scott, Seymour \cite{expairs}]\label{thm:expairs}
    For all integers $t,t'\geq 2$ and nonnull graphs $H_{1},\ldots,H_{t},H'_{1},\ldots,H'_{t'}$, there exists $n\in \poi$ such that if $G$ is a graph with no copy of the complete join of $H_{1},\ldots,H_{t}$ in $G$ and no copy of the complete join of $H'_{1},\ldots,H'_{t'}$ in $\ol{G}$, then $V(G)=V_1\cup\cdots\cup V_n$ such that for each $j\in \{1,\ldots,n\}$, either $|V_j|=1$, or $G[V_j]$ is $H_{i}$-free for some $i\in \set{1}{t}$, or $\ol{G}[V_j]$ is $H'_{i}$-free for some $i\in \set{1}{t'}$.
\end{theorem}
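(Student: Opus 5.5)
We argue by induction on $\sum_i|V(H_i)|+\sum_i|V(H'_i)|$, reducing to the base case where every $H_i$ and $H'_i$ is a single vertex. In that base case the hypothesis says $G$ is $K_t$-free and $\ol{G}$ is $K_{t'}$-free, so \Cref{thm:graphramsey} (with $\max(t,t')$) bounds $|V(G)|$, and we may take every $V_j$ to be a singleton. The inductive step has to produce, from the absence of the large joins, a bounded cover of $V(G)$ by sets on which one of the excluded patterns becomes strictly smaller.

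For the inductive step, choose some $H_1$ with $|V(H_1)|\ge 2$; the case of some $H'_i$ is symmetric under complementation. Fix a vertex $v\in V(H_1)$ and let $H_1^-=H_1\setminus v$. We do not proceed by greedily picking maximal families of disjoint copies. Instead we use the ``clique/stable set of copies'' idea from the Chudnovsky--Scott--Seymour argument.

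Call a set $S\subseteq V(G)$ \emph{good} if $G[S]$ contains no copy of the complete join $J^-$ of $H_1^-,H_2,\dots,H_t$, or contains no copy of the complete join of $H'_1,\dots,H'_{t'}$. In either case the induction hypothesis applied to $G[S]$ covers $S$ by a bounded number of sets of the required form. The sets produced for $J^-$ are $H_1^-$-free, and hence $H_1$-free, or $H_i$-free for some $i\ge 2$, so they have the required form as well.

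Consider a maximal collection of pairwise vertex-disjoint copies $C_1,\dots,C_m$ of $J^-$ in $G$. If $m$ is small, then $V(G)$ is covered by the bounded set $\bigcup C_k$, which we split into singletons, together with its complement. The complement contains no copy of $J^-$ by maximality, so it is good. If $m$ is large, we apply Ramsey's theorem (\Cref{thm:multiramsey}) to the copies $C_k$, colouring each pair by the adjacency pattern between them. This yields many copies that are pairwise complete or pairwise anticomplete in a structured sense.

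The pairwise complete outcome lets us combine copies into a copy of the full join, using one vertex of another copy as $v$ joined appropriately. This is the delicate part, since $v$ must be anticomplete or complete to $H_1^-$ according to $H_1$. The pairwise anticomplete outcome, assembled across many copies, produces an induced complete join of the $H'_i$ in $\ol{G}$, because each $H'_i$ embeds into a disjoint union of copies. Either way we reach a contradiction, so $m$ is bounded.

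The main obstacle is that Ramsey applied to whole copies does not directly give homogeneous adjacency between copies: adjacency between two copies is a bipartite pattern, not a single colour. To handle this I would first apply a Rödl-type or bipartite-Ramsey cleaning. Concretely, for each vertex $x$ outside the copies, record its neighbourhood type on the copies. Then bound the number of distinct relevant types by considering, for each vertex, whether it extends $H_1^-$ to $H_1$. Vertices extending many copies give the forbidden join; vertices extending few are covered by a bounded number of good sets.

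Making this bookkeeping yield a bound $n$ independent of $|V(G)|$ is where I expect the real difficulty, and where I would follow the structure of the original proof in \cite{expairs}.
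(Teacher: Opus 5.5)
There is a genuine gap, at the step you treat as the easy case: the claim that a large maximal family $C_1,\dots,C_m$ of disjoint copies of $J^-$ leads to a contradiction. It cannot, because many disjoint copies of $J^-$ are compatible with the hypotheses.

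For example, take $t=t'=2$, $H_1=K_2$, $H_2=K_1$, $H'_1=H'_2=\ol{K_3}$, and let $G$ be a perfect matching on $2N$ vertices. Then $G$ is triangle-free. Also $\ol{G}$ has no stable set of size three, hence no induced $K_{3,3}$. Yet $J^-=K_2$, and $G$ has $N$ pairwise disjoint, pairwise anticomplete copies of it.

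\begin{itemize}
\item Your anticomplete outcome only produces, in $\ol{G}$, a complete join of copies of $\ol{J^-}$. This contains the join of the $H'_i$ only if the $H'_i$ happen to embed in such joins. Here the join of copies of $\ol{K_2}$ has stability number $2$, so it contains no $K_{3,3}$.
\item In the complete outcome, a vertex of another copy is adjacent to all of $H_1^-$. So it can play $v$ only if $v$ is complete to $H_1^-$ in $H_1$. As you note yourself, Ramsey on whole copies only gives a homogeneous bipartite pattern anyway, not completeness or anticompleteness.
\item Your last paragraph does not repair this. A vertex that extends a copy of $J^-$ to a copy of the full join is already a contradiction, so no such vertices exist and counting them gives nothing. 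Deferring the bound to \cite{expairs} leaves the theorem unproved.
\item The second alternative in your definition of a good set (read in $\ol{G}$) holds for every $S\subseteq V(G)$ by hypothesis. It reduces no parameter, so the induction cannot be invoked on it.
\end{itemize}

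The missing ingredients are a different induction parameter and a density step. The paper obtains this theorem as the case $c=2$, $G_1=G$, $G_2=\ol{G}$ of \Cref{thm:main}. It inducts on the number of factors of the excluded joins, not on their vertex counts.

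A set $\Omega$ is measured by its cover number $\nu(\Omega)$, the least number of sets of the required form covering it. This is subadditive, so $\nu/\nu(\Omega)$ is a space with small atoms when $\nu(\Omega)$ is large. \Cref{lem:maindense} then gives disjoint $X,Y$, each with $\nu$ at least a fixed fraction of $\nu(\Omega)$, such that $(X,Y)$ is almost complete in $G$ or almost complete in $\ol{G}$. That lemma combines the Erd\H{o}s--Hajnal-type embedding argument of \Cref{lem:mainpure}, the refinement/tenacity bookkeeping, and multicolour Ramsey over a bounded number of blocks.

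Since $\nu(X)>1$, $X$ contains a copy (in $G$, resp.\ $\ol{G}$) of every factor of the relevant join. The union $X'$ of these copies has boundedly many vertices, so its common neighbourhood $Y'\subseteq Y$ keeps $\nu(Y')\ge \nu(Y)/2$. Applying the induction hypothesis to $Y'$ with one factor fewer, and adding the copies in $X'$, produces a forbidden join. This bounds $\nu(V(G))$ directly. It never needs to bound the number of disjoint copies of anything, which, as the example shows, is impossible.
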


(For $t=1$ or $t'=1$, this is trivial with $n=1$ since the assumption is stronger than the conclusion. Also, \Cref{thm:graphramsey} corresponds to the case where $H_{1},\ldots,H_{t},H'_{1},\ldots,H'_{t'}$ are all singletons, which means $|V_1|,\ldots, |V_n|\leq 1$.)
\medskip

We generalize \Cref{thm:expairs} to edge-colored complete graphs with arbitrarily many colors -- just as \Cref{thm:multiramsey} extends \Cref{thm:graphramsey} -- and in fact in a couple of other ways: We allow the colors to overlap, and instead of a list of single graphs, we exclude complete joins of graphs that form a ``transversal'' of a prescribed list of sets of graphs. 

Let $V$ be a finite set. We denote by $K_V$ the complete graph with vertex set $V$. For $c\geq 2$, a \textit{$c$-multicoloring of $E(K_V)$} is a $c$-tuple $(G_1,\ldots,G_c)$ of graphs with vertex set $V$ and $E(G_1)\cup\cdots\cup E(G_c)=E(K_V)$; this should be thought of as a $c$-coloring of $E(K_V)$ with the edges receiving {\sl at least} one color. A \textit{class} is a set of graphs taken up to isomorphism. For a list $\mathscr{L}=(\mca{H}_1,\ldots,\mca{H}_t)$ of $t\in \poi$ nonempty classes of nonnull graphs, an \textit{$\mathscr{L}$-complete graph} is the complete join of graphs $H_1,\ldots,H_t$ for some $(H_1,\ldots, H_t)\in \mca{H}_1\times \cdots \times \mca{H}_t$. For a class $\mca{H}$, we say that a graph $G$ is \textit{$\mca{H}$-free} if $G$ is $H$-free for all $H\in \mca{H}$.
\medskip

Our main result is the following. (Note that the case $h=1$ contains \Cref{thm:multiramsey} with a matching bound, as $\tau=1$. For $t=1$, this again is trivial with $n=1$.)

\begin{restatable}{theorem}{main}\label{thm:main}
 Let $c,h,t\in \poi$ with $c,t\geq 2$, and let $n=c^{\tau c(t-1)}-1$, where
$$\tau=2^{c^{3cht}-c^{3ct}}.$$
 Let $(G_1,\ldots,G_c)$ be a $c$-multicoloring of $E(K_V)$ for a finite set $V$. For each $i\in \{1,\ldots,c\}$, let $\mathscr{L}_i$ be a nonempty list of at most $t$ nonempty classes of nonnull graphs on at most $h$ vertices, such that $G_i$ has no $\mathscr{L}_i$-complete induced subgraph. Then $V$ is the union of $n$ sets $V_1,\ldots,V_n$ such that for every $j\in \{1,\ldots,n\}$, either $|V_j|=1$ or there exists $i\in \{1,\ldots,c\}$ such that $G_i[V_j]$ is $\mca{H}$-free for some class $\mca{H}$ from the list $\mathscr{L}_i$.
\end{restatable}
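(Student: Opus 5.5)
We prove the statement by mimicking the proof of \Cref{thm:multiramsey} from the appendix, with single vertices replaced by ``blobs'' that certify the presence of every class in a list. Call a set $X\subseteq V$ \emph{good} if $|X|=1$ or some $G_i[X]$ is $\mca{H}$-free for some $\mca{H}$ in $\mathscr{L}_i$; the goal is to cover $V$ by $n$ good sets. If $X$ is not good, then for every $i$ and every class $\mca{H}\in\mathscr{L}_i$, $G_i[X]$ contains some member of $\mca{H}$, hence a copy on at most $h$ vertices. The key object will be a \emph{rich} set: a set $Y$ of at most $cht$ vertices that, for every $i$ and every $\mca{H}\in \mathscr{L}_i$, contains a $G_i$-copy of a member of $\mca{H}$. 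Every non-good set contains a rich set. The aim is to run a Ramsey-type pigeonhole argument in which the ``vertices'' are rich sets and the ``colour'' between two of them is a colour $i$ such that they are complete to each other in $G_i$.

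The argument proceeds by induction on a potential vector $(s_1,\ldots,s_c)$ with $\sum_i (s_i-1)$ bounded by $c(t-1)$, as in the appendix proof of \Cref{thm:multiramsey}. The inductive statement will be the following. Suppose a set $Z$ admits no sequence of pairwise disjoint rich sets $Y_1,\ldots,Y_{s_i}$ that are pairwise complete in $G_i$. (For $s_i=t$, such a sequence would give an $\mathscr{L}_i$-complete induced subgraph once we pick $|\mathscr{L}_i|\le t$ copies, one per class, from distinct $Y_j$'s.) Then $Z$ is covered by $f(s_1,\ldots,s_c)$ good sets.

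For the inductive step, assume $Z$ is not good, pick a rich $Y\subseteq Z$, and partition $Z\setminus Y$ by the adjacency pattern of each vertex to $Y$ across the $c$ colours. There are at most $2^{c|Y|}\le 2^{c^2ht}$ patterns, and this is where a factor like $\tau$ enters. Consider a cell $P$ of vertices with a common pattern. If some colour $i$ has every vertex of $P$ complete to $Y$ in $G_i$, then any rich sequence inside $P$ extends by $Y$, so $P$ has potential with $s_i$ decreased and induction applies.

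The main obstacle is the cells where no single colour is complete to $Y$: each $v\in P$ misses $Y$ in every colour at some vertex. Since the colours cover every edge, we cannot get complete-ness for free, so Ramsey-style pigeonholing alone fails. Here I would instead refine $Y$. Each such pattern specifies, for each colour, a vertex $y\in Y$ nonadjacent in that colour. I would then restrict to shrinking sub-blobs: for each colour $i$, replace the copies in $Y$ by copies inside the common $G_i$-neighbourhood structure, iterating a bounded number of times (at most $c^{3ct}$-ish rounds, one per colour/class/level). This produces a tower of at most $2^{\,c^{3cht}-c^{3ct}}$ refinement branches, which is why I expect $\tau$ to arise as the total number of leaves in this refinement tree. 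Each leaf either is good or yields a colour with full completeness, and the latter feeds the potential decrease. The resulting recursion is $f(\mathbf{s})\le \tau\sum_i f(\mathbf{s}-e_i)+\cdots$, mirroring the appendix recursion with $c$ replaced by $c^{\tau}$, which gives $n=c^{\tau c(t-1)}-1$. The hardest part will be making this refinement terminate with the stated bound; my first attempt would be a lexicographic potential over (colour, class) pairs.
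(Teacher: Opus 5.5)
There is a genuine gap, and it sits exactly at the step you flag as the main obstacle: the cells in which no colour is complete to $Y$. Pigeonholing on adjacency patterns to one bounded rich set $Y$ cannot handle them. For $c=2$ and $G_2=\overline{G_1}$, a vertex lands in such a cell as soon as it has both a $G_1$-neighbour and a $G_1$-non-neighbour in $Y$, and nothing stops these cells from containing almost all of $Z$. For such a cell the potential $(s_1,\ldots,s_c)$ does not drop. The only parameter that decreases is $|Z|$, so the recursion gives no bound independent of $|V|$.

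``Refining $Y$'' inside common neighbourhoods does not repair this. There is no reason a bounded set should be complete in one colour to a substantial part of $Z$ unless you have first proved that such a configuration exists. The refinement tree with $2^{c^{3cht}-c^{3ct}}$ leaves is reverse-engineered from $\tau$, not constructed. Note also that you use the exclusion of a fixed induced subgraph in each $G_i$ only to bound the length of complete sequences. A density statement extracted from that exclusion is indispensable.

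The missing idea is to measure size by covering number and prove a dense-pair lemma. The paper lets $\nu(X)$ be the minimum number of good sets covering $X$. Since $\nu$ is subadditive, $\mu=\nu/\nu(\Omega)$ is a space, and it is $\alpha$-atomic with tiny $\alpha$ when $\nu(\Omega)$ is large. Lemma~\ref{lem:maindense} then yields $k$ and disjoint $X,Y$ with $\mu(X),\mu(Y)\ge\beta$ such that $(X,Y)$ is $(\mu,\eta)$-sparse in $\overline{G_k}$; that is, each $x\in X$ misses only an $\eta$-fraction of $Y$ in $G_k$. That lemma is where the real work lies, and it has three ingredients:
\begin{enumerate}
\item an Erd\H{o}s--Hajnal-type lemma (Lemma~\ref{lem:mainpure}) giving a sparse pair in $G$ or $\overline{G}$ among $h$ large disjoint sets of an $H$-free graph;
\item a ``tenacity'' iteration over all pairs of $c^{ch}$ disjoint sets, where nested sparse refinements use distinct colours and so have depth less than $c$, because the colours cover every edge;
\item Ramsey's theorem, which finds $h+1$ parts on which some colour $k$ was never used, so Lemma~\ref{lem:mainpure} applied in $G_k$ must give sparsity in $\overline{G_k}$.
\end{enumerate}

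With this pair the induction is short. Since $\nu(X)>1$, the set $X$ is not good, so it contains a witness $X'$ with $|X'|\le ht$. Taking $\eta=1/(2ht)$, subadditivity shows that the common $G_k$-neighbourhood $Y'$ of $X'$ in $Y$ has $\nu(Y')\ge\nu(Y)/2$, and $Y'$ has potential reduced in colour $k$. This yields $\nu(\Omega)\le\lambda^{\sigma-c}$ with $\lambda=3ac^{cht}<c^{\tau}$, which is where $\tau$ actually comes from.

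Your rich-set potential could in fact be run in this scheme, with slightly worse constants. Take $X'$ to be a rich subset of $X$ and $\eta=1/(2cht)$, using that each $G_i$ is $H_i$-free on all of $V$. What is missing from your proposal is the dense-pair lemma itself.
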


\subsection{An application} There is a well-known Ramsey-type result asserting that if a graph $G$ is $\{K_{a},K_{b,b}\}$-free for some $a,b\in \poi$, then every collection of nonempty bounded-size subsets of $V(G)$ either has a small hitting set or contains many pairwise anticomplete sets; where $S,S'\subseteq V(G)$ are \textit{anticomplete} if $S\cap S'=\varnothing$ and there is no edge of $G$ with one end in $S$ and the other in $S'$. (The proof is a straightforward application of \Cref{thm:graphramsey}. See \cite{lozin,pinned}, and also \cite{tw19} for improved bounds.) 

The present work grew out of an earlier attempt to prove an ``$\alpha$-strengthening'' of this result, where the ``$K_a$-free'' assumption is dropped and the ``small hitting set'' conclusion is relaxed to a ``hitting set $X$ such that $\alpha(G[X])$ is bounded'' -- Note that this is stronger: if $G$ also excludes a complete subgraph, then by \Cref{thm:graphramsey}, $X$ has bounded size. (Recall that $\alpha(G)$ denotes the cardinality of a largest stable set in a graph $G$, where a \textit{stable set} is a set of pairwise nonadjacent vertices.) We established this in \cite[Lemma~6.2]{tafinite}, which served as the key ingredient in the resolution of two conjectures from \cite{DKKMMSW} on the tree independence number of graphs in hereditary classes (see \cite{DKKMMSW,DMS2,tafinite} for further discussion). However, the bound on $\alpha(G[X])$ was astronomically large (at least a tower of height equal to the upper bound on the size of the sets), stemming from nested applications of (a special case of) the Graham-Rothschild theorem \cite{GRgeneral}.

Using \Cref{thm:main}, we can now obtain a much better bound:

\begin{restatable}{theorem}{betteranti}\label{thm:better-hit-vs-anti}
   Let $b,c,d\in \mathbb N$ with $b,c\geq 2$, and let $n=c^{c\tau(t-1)}-1$, where
   $$t=2^{2^{2c^2+1}c^2 (2b+d-1)};$$
   $$\tau=2^{c^{3bct}-c^{3ct}}.$$
   Let $G$ be a $K_{b,b}$-free graph and let $\mca{S}$ be a set of subsets of $V(G)$ such that $0<|S|<c$ for every $S\in \mca{S}$. Then either $\mca{S}$ admits a hitting set $X\subseteq V(G)$ such that $\alpha(G[X])\leq bcn$, or there are $d$ sets $S_1,\ldots,S_d$ in $\mca{S}$ that are pairwise anticomplete in $G$.
\end{restatable}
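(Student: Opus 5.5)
The plan is to apply \Cref{thm:main} to an auxiliary multicolored complete graph whose vertex set is $\mca{S}$ itself, with $h=b$ and with the value of $t$ from the statement. The target bound $\alpha(G[X])\leq bcn$ indicates the bookkeeping. We will produce $n$ parts $V_1,\ldots,V_n$ of $\mca{S}$. From each part we will choose a hitting set with stability number at most $bc$: roughly, one ``coordinate'' per set, with fewer than $b$ independent choices for each of fewer than $c$ coordinates. Taking the union over the $n$ parts then gives the bound.

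Concretely, I would first fix for each $S\in\mca{S}$ an ordering $S=\{s_1,\ldots,s_{|S|}\}$, padded to length $c-1$ by repeating the last element. For distinct $S,S'\in\mca{S}$ I would record their ``type'': which pairs of positions $(p,q)$ have $s_p=s'_q$ or $s_ps'_q\in E(G)$. The colors are then as follows.
\begin{itemize}
\item One color is the \emph{anticomplete} graph, joining $S$ and $S'$ when they are anticomplete. Its list is $d$ copies of the class $\{K_1\}$, so an $\mathscr{L}$-complete graph is a $K_d$. We may assume this is excluded, since otherwise we get $d$ pairwise anticomplete sets.
\item The remaining colors correspond to ``diagonal'' position relations: color $p$ joins $S$ and $S'$ when $s_p$ and $s'_p$ are equal or adjacent. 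Its list is $(\{\ol{K_b}\},\{\ol{K_b}\})$, i.e.\ two classes each consisting of the edgeless graph on $b$ vertices.
\end{itemize}
The point of the second bullet is that a complete join of two stable sets of size $b$ in color $p$, with the $p$-th elements all distinct, gives a $K_{b,b}$ in $G$. This is forbidden, so color $p$ has no $\mathscr{L}_p$-complete subgraph. Equal elements must be handled separately, but they are harmless since they only shrink the relevant hitting set. Once the hypotheses hold, \Cref{thm:main} gives the parts. A singleton part contributes one vertex of its set. For a part $V_j$ where the anticomplete color is $K_1$-free, any two sets in $V_j$ touch. A part $V_j$ where color $p$ is $\ol{K_b}$-free has no $b$ sets whose $p$-th elements are pairwise nonadjacent and distinct; for such a part I would take $X_j=\{s_p: S\in V_j\}$, which has $\alpha(G[X_j])<b$. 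The remaining case needs a short argument: choose a minimal collection of sets meeting everything and use that each touches all others.

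The main obstacle is that an arbitrary non-anticomplete pair need not be ``diagonal''. Two sets may touch only through positions $p\neq q$, and this relation is neither symmetric nor captured by $c$ colors. I would first try to normalize via a Ramsey-type preprocessing, which is where the enormous value of $t=2^{2^{2c^2+1}c^2(2b+d-1)}$ comes in. The idea is to use asymmetric types (an ordered pair of position-pair sets, giving $2^{2c^2}$ possibilities) and to change the lists. Instead of excluding $\ol{K_b}$ joins directly, each color would exclude complete joins with $t$ factors, namely the classes of all graphs on at most $b$ vertices. A monochromatic complete join of $t$ blocks can then be refined, by \Cref{thm:multiramsey} applied to the block ordering with $2^{2c^2+1}c^2$ colors, to $2b+d-1$ blocks on which the relation is homogeneous in a fixed direction. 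From homogeneous blocks one extracts either $d$ pairwise anticomplete sets or a $K_{b,b}$ in $G$. This verification of the hypothesis of \Cref{thm:main} is the delicate part, and I expect most of the work to go into arranging the classes so that the freeness conclusion in each part still yields a bounded-$\alpha$ hitting set.
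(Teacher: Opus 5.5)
\textbf{There is a genuine gap: your colours do not form a multicolouring, and the proposed repair does not work.}

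With one ``anticomplete'' colour and $c-1$ diagonal colours, a pair $S,S'$ that touches only through positions $p\neq q$ receives no colour. So $(G_1,\ldots,G_c)$ is not a multicolouring, and \Cref{thm:main} does not apply. You flag this, but your repair fails for three reasons:
\begin{itemize}
\item A list of $t$ copies of the class of all graphs on at most $b$ vertices is equivalent to excluding $K_t$.
\item That class contains $K_1$, so being $\mca{H}$-free for it forces a part to be empty. Each colour would then carry only $K_t$-freeness, the $K_{b,b}$ information is lost, and \Cref{thm:main} degenerates to Ramsey.
\item Replacing the colours by the $2^{2c^2}$ ordered types would change the number of colours, and hence the value of $n$.
\end{itemize}
Relatedly, a part on which a colour is $\{K_1\}$-free is empty, not ``pairwise touching''.

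\textbf{The missing idea} is to choose the last colour so that covering is automatic, and to apply Ramsey only \emph{after} \Cref{thm:main}.
\begin{enumerate}
\item Let the $c$-th colour be the complement of the union of the diagonal colours: join $S,S'$ when $s_p\neq s'_p$ and $s_ps'_p\notin E(G)$ for every $p$. Exclude $K_t$ there, i.e.\ use the list of $t$ copies of $\{K_1\}$, not $K_d$. Keep $(\{\ol{K_b}\},\{\ol{K_b}\})$ for the diagonal colours and take $h=b$; this gives exactly the $n$ of the statement.
\item If the last colour is $K_t$-free, \Cref{thm:main} gives your hitting set as you describe, with empty parts contributing nothing.
\item Otherwise you get $t$ sets whose corresponding elements are pairwise distinct and nonadjacent in every coordinate. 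Order them, and colour each pair by the ordered sets of coordinate pairs $(i_1,i_2)$ that are equal, respectively adjacent. This uses at most $2^{2c^2}$ colours.
\item Apply \Cref{thm:multiramsey}; this is exactly where $t$ comes from. You obtain a homogeneous family of $2b+d$ sets.
\item A homogeneous equality $(i_1,i_2)$ would, via a third set, force two of these sets to have equal $i_1$-coordinates. This contradicts diagonal distinctness.
\item A homogeneous adjacency $(i_1,i_2)$ would give an induced $K_{b,b}$ in $G$. Its sides are the $i_1$-coordinates of the first $b$ sets and the $i_2$-coordinates of the last $b$ sets, each side stable by diagonal nonadjacency.
\item Hence these $2b+d$ sets are pairwise anticomplete.
\end{enumerate}

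\textbf{What is correct.} Padding by repeating the last element is a legitimate substitute for the paper's split of $\mca{S}$ by set size. That split is what costs the factor $c$ in $bcn$. Your argument that a $\ol{K_b}$-free diagonal part yields $\alpha<b$ is also right.

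\textbf{One step needs an actual argument.} To show colour $p$ is $K_{b,b}$-free when $p$-th elements coincide, ``they only shrink the hitting set'' is not a proof. Suppose a cross pair $B_1,B_2$ has equal $p$-th elements. Use $b\geq 2$ to pick a second set $B_3$ on $B_2$'s side. Then $B_1$ being joined to $B_3$ makes the $p$-th elements of $B_2$ and $B_3$ equal or adjacent. But $B_2$ and $B_3$ are nonadjacent in colour $p$, so their $p$-th elements are distinct and nonadjacent, a contradiction.
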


The proof of \Cref{thm:better-hit-vs-anti} closely follows our ideas from \cite[Lemma~6.2]{tafinite}, and so we include it in the appendix. As the reader may notice, somewhat interestingly, the overlapping color classes are crucial to the application of \Cref{thm:main} in that proof.

\section{The main proof}
For the proof of \Cref{thm:main}, we need a few lemmas, and before those, several definitions:
\begin{itemize}[leftmargin=8mm, itemsep=1mm]

    \item Fo integers $k,k'$, we denote by $\set{k}{k'}$ the set of all positive integers at least $k$ and at most $k'$ (and $\set{k}{k'}=\varnothing$ if and only if $k'<k$). For $L\subseteq \poi$, we define 
$$L\yl L=\{(i,j)\in L\times L: i<j\}.$$
We denote by $\clx$ the \textit{co-lexicographic order} on $\poi \yl \poi$: For $(i,j),(i',j')\in \poi \yl \poi$, we have $(i,j)\clx(i',j')$ if and only if either $j<j'$ or $j=j'$ and $i<i'$. The first few elements of $\poi \yl \poi$ according to this order are:
$$(1,2)\ \clx\ (1,3)\ \clx\ (2,3)\ \clx\ (1,4)\ \clx\ (2,4)\ \clx\ (3,4)\ \clx\ (1,5)\ \clx \cdots\hspace{-7mm}$$
We also write $(i,j)\clxe (i',j')$ to mean that either $(i,j)=(i',j')$ or $(i,j)\clx(i',j')$. For every $(i,j)\in \poi \yl \poi$, by the \textit{$\clx$-successor of $(i,j)$} we mean the (unique) minimum pair $(i',j')\in \poi \yl \poi$ with respect to $\clx$ such that $(i,j)\clx (i',j')$. (Specifically, $(i',j')=(i+1,j)$ if $i<j-1$, and $(i',j')=(1,j+1)$ if $i=j-1$.)

\item A \textit{bisequence} is a function $\eps:\poi \yl \poi\to (0,1]$. Given a bisequence $\eps$, we adopt the convention that 
$$\prod_{(i,j)\in \varnothing}\eps(i,j)=1.$$
We denote by $\eps^d$, where $d\in \poi\cup \{0\}$, the bisequence with the rule 
$$\eps^d:(i,j)\mapsto (\eps(i,j))^d$$
and by $\pi_{\eps}$ the bisequence with the rule 
$$\pi_{\eps}:(p,q)\mapsto \prod_{(i,j)\clx (p,q)}\eps(i,j).$$
In particular, $\pi_{\eps}(1,2)=1$ for every bisequence $\eps$.

\item By a \textit{space} we mean a pair $(\Omega,\mu)$ where $\Omega$ is a finite nonempty set and $\mu:2^{\Omega}\to [0,1]$ satisfies $\mu(\varnothing)=0$, $\mu(\Omega)=1$, and $\mu(X)\leq \mu(Y)+\mu(Z)$ for all $X,Y,Z\subseteq \Omega$ with $X\subseteq Y\cup Z$. (In particular, $\mu(X)\leq \mu(Y)$ for all $X\subseteq Y\subseteq \Omega$.)

\item For a vertex $x$ of a graph $G$, we denote the set of all neighbors of $x$ in $G$ by $N_G(x)$. Given a space $(\Omega,\mu)$, a graph $G$ with $V(G)=\Omega$, and $\eps\in (0,1]$, we say that a pair $(X,Y)$ of disjoint subsets of $\Omega$ is \textit{$(\mu,\eps)$-sparse in $G$} if $\mu(N_G(x)\cap Y)< \eps\mu(Y)$ for every vertex $x\in X$.

\end{itemize}

We may now prove our first lemma. The argument is similar to one from the work of Erd\H{o}s and Hajnal \cite{EH}, but needs careful adaptation:

\begin{lemma}\label{lem:mainpure} Let $h\in \poi$ and let $\eps$ be a bisequence taking values in $(0,1/h]$. Let $(\Omega,\mu)$ be a space, let $H$ be a nonnull graph on at most $h$ vertices, and let $G$ be an $H$-free graph with $V(G)=\Omega$. Let $L\subseteq \poi\setminus \{1\}$ with $|L|=h$, and let $(S_{\ell}:\ell\in L)$ be pairwise disjoint nonempty subsets of $\Omega$. Then there exist $(p,q)\in L\yl L$, $X\subseteq S_p$, and $Y\subseteq S_q$, such that
$\mu(X)\geq \pi_{\eps}(p,q)\mu(S_p)$, $\mu(Y)\geq \pi_{\eps}(p,q)\mu(S_q)$, and $(X,Y)$ is $(\mu,\eps(p,q))$-sparse in $G$ or $\ol{G}$.
\end{lemma}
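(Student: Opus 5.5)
The plan is an Erd\H{o}s--Hajnal style greedy embedding of $H$, arguing by contradiction. Suppose no pair $(p,q)\in L\yl L$ with sets $X,Y$ as in the statement exists. Write $L=\{\ell_1<\cdots<\ell_h\}$. If $|V(H)|<h$, I add isolated dummy vertices, which only makes the target copy larger. Let $v_1,\ldots,v_h$ be the vertices of $H$, with $v_k$ assigned to the part $S_{\ell_k}$. I will find vertices $x_{\ell_k}\in S_{\ell_k}$ such that $x_{\ell_a}x_{\ell_b}\in E(G)$ exactly when $v_av_b\in E(H)$. Since the $S_\ell$ are pairwise disjoint, this is a copy of $H$ in $G$, contradicting that $G$ is $H$-free. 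For each pair $(p,q)\in L\yl L$ let $F_{p,q}$ be $G$ if the corresponding vertices of $H$ are adjacent, and $\ol{G}$ otherwise.

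The vertices are chosen in increasing order of index, maintaining "candidate sets" $T_\ell\subseteq S_\ell$ for the indices $\ell$ not yet fixed. Initially $T_\ell=S_\ell$. When it is $p$'s turn, every $T_q$ with $q>p$ has already been shrunk once for each earlier fixed vertex $x_{p'}$: it was replaced by $N_{F_{p',q}}(x_{p'})\cap T_q$. The key point is that these shrinkings are performed in $\clx$-order on the pairs $(p',q)$, so each factor $\eps(p',q)$ enters the product exactly when the measure bound $\pi_\eps(\cdot)$ demands it. The invariant I want, just before pair $(p,q)$ is handled, is
$$\mu(T_p)\geq \pi_\eps(p,q)\mu(S_p)\quad\text{and}\quad \mu(T_q)\geq \pi_\eps(p,q)\mu(S_q).$$
This holds because pairs in $\poi\yl\poi$ outside $L\yl L$ only contribute factors $\le 1$ to $\pi_\eps$, so the product over the pairs actually used is at least $\pi_\eps(p,q)$.

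To choose $x_p\in T_p$, fix $q>p$ in $L$. Let $B_q\subseteq T_p$ be the set of vertices $x$ with $\mu(N_{F_{p,q}}(x)\cap T_q)<\eps(p,q)\mu(T_q)$. Then $(B_q,T_q)$ is $(\mu,\eps(p,q))$-sparse in $F_{p,q}$, with $T_q$ of measure at least $\pi_\eps(p,q)\mu(S_q)$. So by the contradiction hypothesis, $\mu(B_q)<\pi_\eps(p,q)\mu(S_p)$. Since there are fewer than $h$ such $q$, subadditivity gives $\mu(\bigcup_q B_q)$ at most a sum of fewer than $h$ terms. I then want to show that $T_p\setminus\bigcup_q B_q$ is nonempty, and pick $x_p$ there. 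Having fixed $x_p$, I replace each $T_q$ by $N_{F_{p,q}}(x_p)\cap T_q$. This multiplies its measure by at least $\eps(p,q)$, which restores the invariant at the $\clx$-successor pairs.

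The main obstacle is the measure comparison in the last step. A crude union bound compares $\sum_q\pi_\eps(p,q)\mu(S_p)$ with $\mu(T_p)\ge\pi_\eps(p,p')\mu(S_p)$, where $p'$ is the smallest element of $L$ exceeding $p$. This does not directly leave room, because in $\clx$ the pairs $(p,q)$ for $q>p'$ come later and so have smaller $\pi_\eps$. This is exactly where $\eps\le 1/h$ must be used: each $\pi_\eps(p,q)$ with $q>p'$ contains at least one extra factor $\eps(p,p')\le 1/h$ beyond $\pi_\eps(p,p')$. I would first try to order the argument so that the bad sets are bounded by $\pi_\eps(p,p')\mu(S_p)/h$-type quantities. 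If that fails, I would fall back on a refined scheme that shrinks $T_p$ itself, by at most a factor $1-1/h$ per pair, rather than insisting on a single union bound.
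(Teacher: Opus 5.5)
Your plan is the same greedy embedding the paper uses. The paper phrases it directly: it takes a $p$-chain with $p$ maximal and pigeonholes the vertices of $W_p$ by a witnessing index $q_x$, which is equivalent to your union bound over the bad sets $B_q$. However, the proposal stops exactly at the step that carries the content, and the invariant you chose cannot carry it. If all you know is $\mu(T_p)\ge\pi_{\eps}(p,p')\mu(S_p)$, then the single bad set $B_{p'}$ can already exhaust $T_p$, since all you know about it is $\mu(B_{p'})<\pi_{\eps}(p,p')\mu(S_p)$. So no re-ordering of the terms with $q>p'$, and no extra factor $\eps(p,p')$ on them, can close the union bound. The fallback of shrinking $T_p$ by a factor $1-1/h$ per pair is not worked out, and it is not needed.

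The missing idea is sharper bookkeeping combined with the hypothesis $1\notin L$, which your proposal never uses. Each earlier vertex $x_i$ ($i\in L$, $i<p$) shrinks $T_\ell$ by a factor at least $\eps(i,\ell)$. So when $x_p$ is chosen you actually have $\mu(T_\ell)\ge\tilde{\eps}(p,\ell)\mu(S_\ell)$ for all $\ell\ge p$, where $\tilde{\eps}(p,\ell)=\prod_{i=1}^{p-1}\eps(i,\ell)$. This is the right invariant, rather than a $\clx$-ordering of the shrinkings (which in fact happen vertex by vertex). Since $p\ge 2$, for every $q>p$ the pairs $(1,p),\ldots,(p-1,p)$ and $(p-1,q)$ are distinct and all precede $(p,q)$ in $\clx$, so
\[\pi_{\eps}(p,q)\le\tilde{\eps}(p,p)\,\eps(p-1,q)\le\tilde{\eps}(p,p)/h .\]
Hence $\mu\bigl(\bigcup_q B_q\bigr)<\frac{h-1}{h}\,\tilde{\eps}(p,p)\mu(S_p)\le\mu(T_p)$, and a good $x_p$ exists. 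Likewise $\pi_{\eps}(p,q)\le\tilde{\eps}(p,q)$ gives the lower bound on $\mu(T_q)$ needed to invoke your contradiction hypothesis. This is exactly the paper's computation $\mu(X)\ge\mu(W_p)/h\ge\tilde{\eps}(p,p)\mu(S_p)/h\ge\pi_{\eps}(p,q)\mu(S_p)$.

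Without $p\ge 2$ the argument genuinely breaks. For example, if $1,2\in L$, then $\pi_{\eps}(1,2)=1$ and $B_2$ may be almost all of $S_1$.

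One further caveat, shared with the paper's proof: at the last vertex $p=\max L$ you need $T_p\neq\varnothing$. This follows from a measure bound only if $\mu(S_p)>0$, which nonemptiness of $S_p$ does not guarantee in a general space. It does hold where the lemma is applied.
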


\begin{proof}
For all $p,q\in \poi$ with $p\leq q$, define 
$$\tilde{\eps}(p,q)=\prod_{i=1}^{p-1}\eps(i,q)\in (0,1].$$
In particular, $\tilde{\eps}(1,q)=1$ for every $q\in \poi$.

We may assume without loss of generality that $|V(H)|=h$; say,
$V(H)=\{v_{\ell}:\ell\in L\}$.
Since $V(G)=\Omega\neq \varnothing$ and $G$ is $H$-free, it follows that $h=|V(H)|\geq 2$. 

For every $p\in L$, let 
$$I_p=\{i\in L:i<p\}$$
and let 
$$J_p=\{j\in L:j\geq p\}.$$
By a \textit{$p$-chain in $G$} we mean an $h$-tuple $(W_{\ell}:\ell\in L)$ with the following specification:

 \begin{itemize}[leftmargin=8mm, itemsep=1mm]
     \item For each $\ell\in L$, we have $W_{\ell}\subseteq S_{\ell}$;
     \item For each $\ell\in I_p$, we have $|W_{\ell}|=1$; say, $W_{\ell}=\{w_{\ell}\}$, such that for every $\ell'\in L\setminus \{\ell\}$, 
     \begin{itemize}[leftmargin=8mm, itemsep=1mm]
         \item if $v_{\ell}v_{\ell'}\in E(H)$, then $W_{\ell'}\subseteq N_G(w_{\ell})$; and 
         \item if $v_{\ell}v_{\ell'}\notin E(H)$, then $W_{\ell'}\subseteq N_{\ol{G}}(w_{\ell})$.
     \end{itemize}
     \item For each $\ell \in J_p$, 
     we have $\mu(W_{\ell})\geq \tilde{\eps}(p,\ell)\mu(S_{\ell})$.
 \end{itemize}
It follows that $(S_{\ell}:\ell\in L)$ is a $1$-chain in $G$. Choose the maximum $p\in L$ for which there is a $p$-chain $(W_{\ell}:\ell\in L)$ in $G$. 

We claim that:

\sta{\label{st:sparsepair} For every $x\in W_p$, there exists $q_x\in J_p\setminus \{p\}$ such that 
\begin{itemize}[leftmargin=8mm, itemsep=1mm]
    \item if $v_pv_{q_x}\in E(H)$, then $\mu(N_G(x)\cap W_{q_x})< \eps(p,q_x)\mu(W_{q_x})$; and 
    \item if $v_pv_{q_x}\notin E(H)$, then $\mu(N_{\ol{G}}(x)\cap W_{q_x})<\eps(p,q_x)\mu(W_{q_x})$.
\end{itemize}}

Suppose not. Then there exists $w_p\in W_p$ such that for every $\ell\in J_p\setminus\{p\}$, 
\begin{itemize}[leftmargin=8mm, itemsep=1mm]
    \item if $v_pv_{\ell}\in E(H)$, then $\mu(N_G(w_p)\cap W_{\ell})\geq \eps(p,\ell)\mu(W_{\ell})$; and 
    \item if $v_pv_{\ell}\notin E(H)$, then $\mu(N_{\ol{G}}(w_p)\cap W_{\ell})\geq \eps(p,\ell)\mu(W_{\ell})$.
\end{itemize}
Note that if $J_p=\{p\}$ (or equivalently $p=\max L$), then $G[\{w_{\ell}:\ell\in I_p\}\cup \{w_p\}]$ is isomorphic to $H$, a contradiction to the assumption that $G$ is $H$-free. We deduce that $J_p\neq \{p\}$. Let $p^+=\min(J_p\setminus \{p\})$. Then $I_{p^+}=I_p\cup \{p\}$ and $J_{p^+}=J_p\setminus \{p\}$. Define the $h$-tuple $(W'_{\ell}:\ell\in L)$ as follows:
\[W'_{\ell}=\begin{cases}
    \{w_{\ell}\} &\text{if } \ell\in I_{p^+};\\
    N_G(w_p)\cap W_{\ell} &\text{if } \ell\in J_{p^+}\text{ and }v_pv_{\ell}\in E(H);\\
     N_{\ol{G}}(w_p)\cap W_{\ell} &\text{if } \ell\in J_{p^+}\text{ and }v_pv_{\ell}\notin E(H).
\end{cases}\]
We claim that $(W'_{\ell}:\ell\in L)$ is a $p^+$-chain in $G$, which would then contradict the maximality of $p$. It is easy to observe that $(W'_{\ell}:\ell\in L)$ satisfies the first two bullets of the definition. To check the third bullet, for every $\ell\in J_{p^+}=J_p\setminus \{p\}$, since $\ell\geq p^+\geq p+1$, it follows that
$$\eps(p,\ell)\mu(W_{\ell})\geq (\eps(p,\ell)\cdot\tilde{\eps}(p,\ell)) \mu(S_{\ell})=\tilde{\eps}(p+1,\ell)\mu(S_{\ell})\geq \tilde{\eps}(p^+,\ell)\mu(S_{\ell}).$$
Consequently, if $v_pv_{\ell}\in E(H)$, then 
$$\mu(W'_{\ell})=\mu(N_G(w_p)\cap W_{\ell})\geq \eps(p,\ell)\mu(W_{\ell})\geq \tilde{\eps}(p^+,\ell)\mu(S_{\ell});$$
and if $v_pv_{\ell}\notin E(H)$, then
$$\mu(W'_{\ell})=\mu(N_{\ol{G}}(w_p)\cap W_{\ell})\geq \eps(p,\ell)\mu(W_{\ell})\geq \tilde{\eps}(p^+,\ell)\mu(S_{\ell}).$$
This proves \eqref{st:sparsepair}.
\medskip

For each $x\in W_p$, let $q_x\in J_p\setminus \{p\}$ be as given by \eqref{st:sparsepair}. Since $|J_p\setminus \{p\}|<h$, there exist $X\subseteq W_p$ and $q\in J_p\setminus \{p\}$ such that $\mu(X)\geq \mu(W_p)/h$ and $q_x=q$ for all $x\in X$. Let $Y=W_q$. Then $X\subseteq S_p$ and $Y\subseteq S_q$. 

Recall that $1\notin L$, and so $p\geq 2$. Since $q\geq p+1$, we have $(p-1,q)\clx (p,q)$. Therefore, since $\eps(p-1,q)\leq 1/h\leq 1$, it follows that
$$\pi_{\eps}(p,q)\leq \left(\prod_{i=1}^{p-1}\eps(i,p)\right)\cdot \eps(p-1,q)=\tilde{\eps}(p,p)\cdot \eps(p-1,q)\leq \tilde{\eps}(p,p)/h.$$
Combining this with $\mu(W_p)\geq \tilde{\eps}(p,p)\mu(S_p)$ yields
$$\mu(X)\geq \mu(W_p)/h\geq (\tilde{\eps}(p,p)/h)\cdot \mu(S_p)\geq \pi_{\eps}(p,q)\mu(S_p).$$
Note also that $\pi_{\eps}(p,q)\leq \tilde{\eps}(p,q)$, and so
$$\mu(Y)=\mu(W_q)\geq \tilde{\eps}(p,q)\mu(S_q)\geq \pi_{\eps}(p,q)\mu(S_q).$$
Furthermore, since $q_x=q$ for all $x\in X$, it follows from \eqref{st:sparsepair} that if $v_pv_q\in E(H)$, then $(X,Y)$ is $(\mu,\eps(p,q))$-sparse in $G$, and if $v_pv_q\notin E(H)$, then $(X,Y)$ is $(\mu,\eps(p,q))$-sparse in $\ol{G}$. This completes the proof of \Cref{lem:mainpure}.
\end{proof}

For our second lemma, we need another definition: We say that a space $(\Omega,\mu)$ is \textit{$\alpha$-atomic}, where $\alpha\in (0,1]$, if $\mu(\{x\})\leq \alpha$ for every $x\in \Omega$.

\begin{lemma}\label{lem:largesubsets}
Let $n\in \poi$, let $\alpha,\beta\in (0,1]$ with $\alpha+\beta\leq 1/n$, and let $(\Omega, \mu)$ be an $\alpha$-atomic space. Then there are $n$ pairwise disjoint subsets $S_1,\ldots, S_n$ of $\Omega$ such that $\mu(S_1),\ldots,\mu(S_n)\geq \beta$.
\end{lemma}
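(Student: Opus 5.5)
We will build the sets greedily, one element at a time. The only properties of $\mu$ we will use are the subadditivity axiom $\mu(X)\leq \mu(Y)+\mu(Z)$ for $X\subseteq Y\cup Z$, together with $\mu(\Omega)=1$, $\mu(\varnothing)=0$, and $\alpha$-atomicity.

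Suppose that for some $k\in\set{0}{n-1}$ we have already found pairwise disjoint sets $S_1,\ldots,S_k$, each with $\mu(S_i)\geq\beta$ and $\mu(S_i)<\alpha+\beta$. (The starting case $k=0$ is vacuous.) Let $R=\Omega\setminus(S_1\cup\cdots\cup S_k)$. Applying subadditivity repeatedly (by induction on $k$) to $\Omega\subseteq S_1\cup\cdots\cup S_k\cup R$ gives
$$1=\mu(\Omega)\leq \sum_{i=1}^k\mu(S_i)+\mu(R)< k(\alpha+\beta)+\mu(R).$$
Since $k\leq n-1$ and $\alpha+\beta\leq 1/n$, this yields $\mu(R)>1-(n-1)/n=1/n\geq \alpha+\beta\geq\beta$.

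Next we carve $S_{k+1}$ out of $R$. List the elements of $R$ as $x_1,\ldots,x_m$ and set $T_j=\{x_1,\ldots,x_j\}$, so $T_0=\varnothing$ and $T_m=R$. Because $\mu(T_0)=0<\beta\leq\mu(T_m)$, there is a least $j\geq 1$ with $\mu(T_j)\geq\beta$. Put $S_{k+1}=T_j$. Minimality of $j$ gives $\mu(T_{j-1})<\beta$. Since $T_j\subseteq T_{j-1}\cup\{x_j\}$, subadditivity and atomicity give
$$\mu(S_{k+1})\leq\mu(T_{j-1})+\mu(\{x_j\})<\beta+\alpha.$$
So $S_{k+1}$ is disjoint from the earlier sets and satisfies both $\mu(S_{k+1})\geq\beta$ and $\mu(S_{k+1})<\alpha+\beta$, which keeps the induction going.

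After $n$ steps this produces the required sets $S_1,\ldots,S_n$. The one real point is to carry the upper bound $\mu(S_i)<\alpha+\beta$ as part of the induction hypothesis: it is exactly what guarantees enough mass remains in $R$ for the next step. Since $\mu$ is only subadditive rather than additive, every mass estimate must be phrased as an inclusion $X\subseteq Y\cup Z$, which is how both displayed inequalities above are obtained.
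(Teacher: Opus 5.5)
Your proof is correct and is essentially the paper's argument. The paper also greedily peels off a (there, inclusion-)minimal set of measure at least $\beta$, bounds its measure by $\alpha+\beta\le 1/n$ using atomicity and subadditivity, and uses subadditivity to show the remainder keeps measure at least $(k-1)/n$; it phrases this as an induction on $k$ over subsets $\Omega'$ with $\mu(\Omega')\ge k/n$, whereas you iterate forward with prefix-minimal sets. One small slip: your strict inequality $\sum_{i\le k}\mu(S_i)<k(\alpha+\beta)$ fails for $k=0$. It is harmless, since in that case $\mu(R)=\mu(\Omega)=1\ge\beta$ directly.
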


\begin{proof}
We prove a stronger statement: For each $k\in \set{1}{n}$ and every $\Omega'\subseteq \Omega$ with $\mu(\Omega')\geq k/n$, there are $k$ pairwise disjoint subsets $S_1,\ldots, S_k$ of $\Omega'$ such that $\mu(S_1),\ldots,\mu(S_k)\geq \beta$. The proof is by induction on $k$. The case $k=1$ is immediate, as $\beta\leq 1/n$. For $k\geq 2$, since $\Omega'$ is finite, we may choose a minimal subset $S_k$ of $\Omega'$ with $\mu(S_k)\geq \beta>0$; in particular, $S_k\neq \varnothing$. Let $x\in S_k$. Since $S_k$ is minimal, it follows that $\mu(S_k\setminus \{x\})<\beta$, and so
$$\mu(S_k)\leq \mu(\{x\})+\mu(S_k\setminus \{x\})
\leq \alpha+\beta\leq 1/n.$$
Consequently,
$$\mu(\Omega'\setminus S_k)\geq \mu(\Omega')-\mu(S_k)\geq (k-1)/n.$$
Now, by the inductive hypothesis, there are pairwise disjoint subsets $S_1,\ldots, S_{k-1}$ of $\Omega'\setminus S_k$ with $\mu(S_1),\ldots,\mu(S_{k-1})\geq \beta$, which along with $S_k$ comprise the desired $k$ subsets of $\Omega'$.
\end{proof}

The following captures the bulk of the difficulty in the proof of \Cref{thm:main}:

\begin{lemma}\label{lem:maindense} Let $\eta\in (0,1]$ and let $a,b,c,h,t\in \poi$ with $c\geq 2$, $b\geq \max\{c,h,1/\eta\}$, and
$$a\geq b^{(2c+2)^{\tbinom{c^{ch}}{2}-1}}.$$
Let $\alpha,\beta\in (0,1]$ with $\alpha+\beta a\leq c^{-ch}$. Let $H_1,\ldots, H_c$ be nonnull graphs on at most $h$ vertices, let $(\Omega,\mu)$ be an $\alpha$-atomic space, and let $(G_1,\ldots, G_c)$ be a $c$-multicoloring of $E(K_{\Omega})$ such that $G_i$ is $H_i$-free for every $i\in \set{1}{c}$. Then there exist $k\in \set{1}{c}$ and disjoint $X,Y\subseteq\Omega$ with $\mu(X),\mu(Y)\geq \beta$ such that $(X,Y)$ is $(\mu,\eta)$-sparse in $\ol{G_k}$. 
\end{lemma}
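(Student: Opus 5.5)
The plan is to reduce to \Cref{lem:mainpure} by first building a large family of disjoint sets of non-negligible measure and then colouring pairs of these sets by a ``dominant colour''. First apply \Cref{lem:largesubsets} with $n=c^{ch}$ and with $\beta a$ in place of $\beta$; this is legitimate since $\alpha+\beta a\leq c^{-ch}$. It yields $N=c^{ch}$ pairwise disjoint sets $S_2,\ldots,S_{N+1}$, each of measure at least $\beta a$; the shift of indices is so that $1\notin L$ later. The budget $a\geq b^{(2c+2)^{\binom{N}{2}-1}}$ says that we may afford to shrink each set by a factor of roughly $b^{-(2c+2)^{m}}$ over $\binom{N}{2}$ rounds and still keep measure at least $\beta$.

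\textbf{Cleaning the pairs.} Next, process the pairs $(i,j)$ of indices one at a time in $\clx$ order. At each pair, shrink $S_i$ and $S_j$ so that some colour $k(i,j)$ is ``robustly dense'' from $S_i$ to $S_j$. The basic move is as follows. Every $x\in S_i$ has, by subadditivity and $E(G_1)\cup\cdots\cup E(G_c)=E(K_\Omega)$, some colour $k$ with $\mu(N_{G_k}(x)\cap S_j)\geq \mu(S_j)/c$. Pigeonholing over colours gives a subset of $S_i$ of measure at least $\mu(S_i)/c$ using a common $k$. The danger is that this density is not inherited by subsets of $S_j$. To handle it, I will use a dichotomy at each step. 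Either some large $X\subseteq S_i$, $Y\subseteq S_j$ form a pair that is $(\mu,\eta)$-sparse in $\ol{G_k}$, and since $\eta\geq 1/b$ we are done provided the measures are still at least $\beta$. Or else we can pass to large subsets on which the $G_k$-density into every large subset of the partner set is at least $1/b$. Each such refinement loses a factor polynomial in $b$, of order $b^{2c+2}$. Because later pairs re-shrink earlier sets, the exponents compound, which is the source of the tower $(2c+2)^{\binom{N}{2}-1}$.

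\textbf{Ramsey and the final application.} If no early exit occurs, every pair of the $N$ surviving sets receives a colour $k(i,j)$ with the robust density property. By \Cref{thm:multiramsey}, with $N=c^{ch}\geq c^{c(h-1)}$, there is a set $L$ of $h$ indices on which all pairs have the same colour $k$. Now apply \Cref{lem:mainpure} to $G_k$, which is $H_k$-free, with a constant bisequence $\eps\equiv 1/b\leq 1/h$ on the sets $(S_\ell:\ell\in L)$. It yields $(p,q)$ and $X\subseteq S_p$, $Y\subseteq S_q$ of relative measure at least $\pi_\eps(p,q)\geq b^{-\binom{N}{2}}$, with $(X,Y)$ sparse in $G_k$ or in $\ol{G_k}$. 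Sparse in $\ol{G_k}$ with parameter $1/b\leq \eta$ is exactly the desired conclusion. Sparse in $G_k$ contradicts the robust density of colour $k$ from $S_p$ into large subsets of $S_q$, since $X$ and $Y$ are large.

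\textbf{Main obstacle.} The hard step is the pair-cleaning. One must formulate the ``robustly dense'' invariant so that it survives all later shrinkings and is strong enough to rule out sparseness in $G_k$ in the last step. At the same time, the alternative branch must produce a genuine $(\mu,\eta)$-sparse pair in $\ol{G_k}$ of measure at least $\beta$. I would first try to iterate: repeatedly look for $x$ whose $G_k$-neighbourhood in the current $Y$ is small, and remove such $x$ or shrink $Y$. If a large set of such $x$ exists, then by the covering property they are dense in other colours, which lets one switch colour, and this happens at most $c$ times per pair. If no such set exists, the pair is sparse in $\ol{G_k}$ after complementing the roles of the colours. This colour-switching, done at most $c$ times on each side, is what I expect accounts for the $2c+2$ in the exponent.
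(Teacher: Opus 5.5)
Your architecture matches the paper's: \Cref{lem:largesubsets} gives $c^{ch}$ disjoint sets, a per-pair cleaning leaves a colour on each pair, \Cref{thm:multiramsey} makes that colour constant on an index set, \Cref{lem:mainpure} is applied to $G_k$, and the outcome ``sparse in $G_k$'' is refuted by the cleaning. But the cleaning, which you rightly flag as the main obstacle, is the whole content of the lemma, and what you sketch for it does not work.

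\textbf{The invariant.} As you phrase it (every vertex has $G_k$-density at least $1/b$ into every large subset of the partner set), the invariant is unattainable in general. Taking that subset to be the $G_k$-non-neighbourhood of $x$ shows it already forces the pair to be sparse in $\ol{G_k}$, so it is the conclusion itself. Likewise, ``no large set of vertices with small $G_k$-degree'' gives density, not sparsity in $\ol{G_k}$ (which means near-completeness in $G_k$). So the last step of your obstacle paragraph does not follow.

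The invariant that works is about pairs: no $X'\subseteq S_p$, $Y'\subseteq S_q$ with $\mu(X')\geq\delta\mu(S_p)$ and $\mu(Y')\geq\delta\mu(S_q)$ is $(\mu,\eps)$-sparse in $G_k$. The paper gets it from a maximal nested chain of subpairs, each of relative measure at least $\delta$ in the previous one and $(\mu,\eps)$-sparse in a colour not used before. Any unused colour then has the invariant. The chain has length less than $c$ because a vertex $x$ at the bottom would satisfy $\mu(N_{G_{f(i)}}(x)\cap Y_c)<\eps\delta^{-c}\mu(Y_c)$ in every colour, which contradicts the colours covering $E(K_\Omega)$ provided $\eps\leq\delta^c/c$.

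\textbf{The parameters.} That inequality is where your choice breaks. With the constant bisequence $1/b$, \Cref{lem:mainpure} only produces a pair that is $(1/b)$-sparse in $G_k$, of relative measure about $b^{-\binom{N}{2}}$. To refute it you would need the invariant with $\eps=1/b$ and $\delta\leq b^{-\binom{N}{2}}$, and for such parameters colour-switching gives no bound. For example, take $G_2=\ol{G_1}$ and let every vertex of $X$ be complete in $G_1$ to a set $Z\subseteq Y$ of relative measure $\delta<1/b$ and anticomplete in $G_1$ to $Y\setminus Z$. Then $(X,Y)$ is $(\mu,1/b)$-sparse in $G_1$ and $(X,Z)$ is sparse in $G_2$, so no colour is ``missed''.

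The paper's fix has three parts. It uses non-constant bisequences defined recursively along the co-lexicographic order: $\delta(p,q)=\pi_{\delta^c}(p,q)\pi_{\eps}(p,q)$ and $\eps(p,q)=\pi_{\delta^c}(p,q)\delta^c(p,q)\leq\delta^c(p,q)/c$. It applies \Cref{lem:mainpure} with this $\eps$. And it cleans pairs in \emph{reverse} co-lexicographic order. Then the shrinking of $S_p,S_q$ after $(p,q)$ is cleaned (a factor at least $\pi_{\delta^c}(p,q)$) and the loss $\pi_\eps(p,q)$ from \Cref{lem:mainpure} both involve only earlier pairs. Their product is exactly $\delta(p,q)$, so a pair sparse in $G_k$ could be appended to the maximal chain for $(p,q)$, contradicting maximality.

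The direction is forced (with \Cref{lem:mainpure} as stated). In your forward order, for $p<i<q$ the pair $(i,q)$ is cleaned after $(p,q)$ and may shrink $S_q$ by $\delta(i,q)^{c-1}$, which requires $\delta(p,q)\leq\delta(i,q)^{c-1}$. Meanwhile the guarantee of \Cref{lem:mainpure} at $(i,q)$ is $\pi_\eps(i,q)\leq\eps(p,q)$, which requires $\delta(i,q)\leq\delta(p,q)^c/c$. These two constraints are incompatible for $c\geq 2$.
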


\begin{proof}
Define two bisequences $\delta,\eps$ recursively, as follows: 
\medskip

\begin{itemize}[leftmargin=8mm, itemsep=1mm]
    \item Let $\delta(1,2)=1$ and let
 $\eps(1,2)=1/b<1$.

\item For $(p,q)\neq (1,2)$ with $\delta(i,j)$ and $\eps(i,j)$ defined for all $(i,j)\clx (p,q)$, let
$$\delta(p,q)=\pi_{\delta^c}(p,q)\cdot \pi_{\eps}(p,q)$$
and let
$$\eps(p,q)=\pi_{\delta^c}(p,q)\cdot \delta^c(p,q).$$
\end{itemize}
\medskip

\noindent Note that $\eps(p,q)$ depends on $\delta(p,q)$, which is defined first. Also, using $b\geq \max\{c,h,1/\eta\}$, it is straightforward to check that:

\sta{\label{st:largetheta} For all $(p,q)\in \poi \yl \poi$, we have $\eps(p,q)\leq \min\{\delta^c(p,q)/c, 1/h,  \eta\}$.}

Throughout, let 
$n=c^{ch}$
and let
$$\mca{I}=\set{1}{n}\yl \set{1}{n}=\{(i,j)\in \poi \yl \poi:(i,j)\clx (1,n+1)\}.$$

First, we need a ``robust'' notion of sparsity (similar to ones appearing in several earlier works; see, for instance, \cite{pp1}). Let $X,Y\subseteq \Omega$ be disjoint, let $(p,q)\in \mca{I}$ and let $d\in \{0,\ldots, c\}$. A \textit{$(p,q)$-refinement of $(X,Y)$ of depth $d$} is a $(2d+3)$-tuple 
$(X_0,\ldots, X_d; Y_0,\ldots, Y_d; f)$
such that:

\begin{itemize}[leftmargin=8mm, itemsep=1mm]
    \item $X=X_0\supseteq\cdots\supseteq X_d$ and $Y=Y_0\supseteq\cdots\supseteq Y_d$;
    \item $f: \set{1}{d}\to \set{1}{c}$ is an injection; and
\item for every $i\in \set{1}{d}$, we have 
\begin{itemize}[leftmargin=8mm, itemsep=1mm]
    \item $\mu(X_i)\geq \delta(p,q)\mu(X_{i-1})$;
    \item $\mu(Y_i)\geq \delta(p,q)\mu(Y_{i-1})$; and
    \item $(X_i,Y_i)$ is $(\mu,\eps(p,q))$-sparse in $G_{f(i)}$.
\end{itemize}
\end{itemize}
In particular, $(X;Y;f:\varnothing\to \set{1}{c})$ is the (unique) depth-zero $(p,q)$-refinement of $(X,Y)$. The \textit{$(p,q)$-tenacity of $(X,Y)$} is the maximum depth of a $(p,q)$-refinement of $(X,Y)$. We show that:

\sta{\label{st:notallcolors} For all disjoint $X,Y\subseteq \Omega$ with $\mu(X)>0$ and every $(p,q)\in \mca{I}$, the $(p,q)$-tenacity of $(X,Y)$ is smaller than $c$.}

Suppose that $(X,Y)$ has a $(p,q)$-refinement $(X_0,\ldots, X_c; Y_0,\ldots, Y_c; f)$ of depth $c$. Note that
$\mu(X_c)\geq \delta^c(p,q)\mu(X)>0$; in particular, $X_c\neq\varnothing$. Let $x\in X_c$. For each $i\in \set{1}{c}$, since $x\in X_c\subseteq X_i$, $Y_c\subseteq Y_i$, and $(X_i,Y_i)$ is $(\mu,\eps(p,q))$-sparse in $G_{f(i)}$, it follows that
$$\mu(N_{G_{f(i)}}(x)\cap Y_{c})\leq \mu(N_{G_{f(i)}}(x)\cap Y_{i})<\eps(p,q)\mu(Y_i)\leq \left(\eps(p,q)\cdot \delta^{i-c}(p,q)\right)\mu(Y_c).$$
Also, by \eqref{st:largetheta}, we have $\eps(p,q)\leq \delta^c(p,q)/c\leq \delta^{c-i}(p,q)/c$. Therefore,
$$\mu(N_{G_{f(i)}}(x)\cap Y_{c})<\mu(Y_c)/c.$$
But now since $Y_c=\bigcup_{i=1}^c(N_{G_{i}}(x)\cap Y_{c})$ and $\set{f(1)}{f(c)}=\set{1}{c}$, it follows that
$$\mu(Y_c)\leq \mu\left(\bigcup_{i=1}^c(N_{G_{f(i)}}(x)\cap Y_{c})\right)\leq \sum_{i=1}^{c} \mu(N_{G_{f(i)}}(x)\cap Y_{c})< \sum_{i=1}^{c} \mu(Y_c)/c=\mu(Y_c);$$
a contradiction. This proves \eqref{st:notallcolors}.
\medskip

Since $(\Omega,\mu)$ is $\alpha$-atomic and $\alpha+\beta a\leq c^{-ch}=1/n$, by \Cref{lem:largesubsets}, there are $n$ pairwise disjoint subsets $S_1,\ldots, S_{n}$ of $\Omega$ with
$$\mu(S_1),\ldots, \mu(S_{n})\geq \beta a>0.$$
Let $\mca{I}^+=\mca{I}\cup \{(1,n+1)\}$ (note that $(1,n+1)$ is the $\clx$-successor of $(n-1,n)$, which in turn is the maximum element of $\mca{I}$ with respect to $\clxe$). Define the sets
$$\left(S_{p,q,1},\ldots, S_{p,q,n}:\ (p,q)\in \mca{I}^+\right)$$
in ``backward recursion'' with respect to $\clxe$, as follows: 

\begin{itemize}[leftmargin=8mm, itemsep=1mm]
    \item For $(p,q)=(1,n+1)$, let 
$(S_{1,n+1,1},\ldots, S_{1,n+1,n})=(S_1,\ldots, S_n)$.
\item Let $(p,q)\in \mca{I}$ and assume that $S_{r,s,1},\ldots,S_{r,s,n}$ are defined for the $\clx$-successor $(r,s)\in \mca{I}^+$ of $(p,q)$. Let $d_{p,q}\leq c$ be the $(p,q)$-tenacity of $(S_{r,s,p},S_{r,s,q})$. Choose a $(p,q)$-refinement $(X_0,\ldots, X_{d_{p,q}}; Y_0,\ldots, Y_{d_{p,q}}; f_{p,q})$ of $(S_{r,s,p},S_{r,s,q})$ of depth $d_{p,q}$. Let
    \[S_{p,q,\ell}=\begin{cases}
        X_{d_{p,q}} &\quad \text{if }\ell=p;\\
        Y_{d_{p,q}} &\quad \text{if } \ell=q;\\
        S_{r,s,\ell} &\quad \text{if } \ell\in \set{1}{n}\setminus \{p,q\}.
    \end{cases}\]
\end{itemize}
(Intuitively, this shirks the pairs $(S_i,S_j)$ into sparse pairs one at a time, without severely ``undoing'' the sparse pairs obtained from previous iterations.)
\medskip

It is immediate from the construction that:

\sta{\label{st:remainlarge} Let $\ell\in \set{1}{n}$. Then, for all $(p_1,q_1),(p_2,q_2)\in \mca{I}^+$ with $(p_1,q_1)\clxe (p_2,q_2)$, we have $S_{p_1,q_1,\ell}\subseteq S_{p_2,q_2,\ell}$ and 
$$\mu(S_{p_1,q_1,\ell})\geq 
    \left(\displaystyle\prod_{(p_1,q_1)\clxe (i,j)\clx (p_2,q_2)} \delta^{d_{i,j}}(i,j)\right)\mu(S_{p_2,q_2,\ell}).$$
In particular, for every $(p,q)\in \mca{I}^+$, we have $S_{1,2,\ell}\subseteq S_{p,q,\ell}$ and
   $$\mu(S_{1,2,\ell})\geq 
   \pi_{{\delta}^c}(p,q)\mu(S_{p,q,\ell}).$$}

We further claim that:

\sta{\label{st:missedcolor} There exist $k\in \set{1}{c}$ and an $(h+1)$-subset $L$ of $\set{1}{n}$ such that for every $(p,q)\in L\yl L$, we have $k\notin \{f_{p,q}(1),\ldots, f_{p,q}(d_{p,q})\}$.}

Let $(p,q)\in \mca{I}$ and let $(r,s)$ be the $\clx$-successor of $(p,q)$. Since $\mu(S_{1,n+1,p})=\mu(S_{p})>0$ and since $(r,s)\clxe (1,n+1)$, it follows from \eqref{st:remainlarge} that $\mu(S_{r,s,p})>0$. Thus, by \eqref{st:notallcolors}, we have $d_{p,q}<c$; that is, there exists $k_{p,q}\in \set{1}{c}$ such that $k_{p,q}\notin \{f_{p,q}(1),\ldots, f_{p,q}(d_{p,q})\}$. Since $c\geq 2$ and $n=c^{ch}$, by \Cref{thm:multiramsey}, there exist $k\in \set{1}{c}$ and an $(h+1)$-subset $L$ of $\set{1}{n}$ such that $k_{p,q}=k$ for every $(p,q)\in L\yl L$. This proves \eqref{st:missedcolor}.
\medskip

From now on, let $k\in \set{1}{c}$ and the $(h+1)$-subset $L$ of $\set{1}{n}$ be as given by \eqref{st:missedcolor}. Recall that $G_k$ is $H_k$-free, and by \eqref{st:largetheta}, $\eps(p,q)\leq 1/h$ for all $(p,q)\in \poi \yl \poi$. So we can apply \Cref{lem:mainpure} to $(S_{1,2,\ell}:\ell\in L\setminus \{1\})$ to obtain some $(p,q)\in (L\setminus \{1\})\yl (L\setminus \{1\}) \subseteq \mca{I}$ for which there exist $X\subseteq S_{1,2,p}$ and $Y\subseteq S_{1,2,q}$ such that

\begin{itemize}[leftmargin=8mm, itemsep=1mm]
    \item $\mu(X)\geq \pi_{\eps}(p,q)\mu(S_{1,2,p})$ and $\mu(Y)\geq \pi_{\eps}(p,q)\mu(S_{1,2,q})$; and
     \item $(X,Y)$ is $(\mu,\eps(p,q))$-sparse in either $G_k$ or $\ol{G_{k}}$.
\end{itemize}
\medskip

In fact, we show that:

\sta{\label{st:notsparse}$(X,Y)$ is $(\mu,\eta)$-sparse in $\ol{G_{k}}$.}

Suppose not. By \eqref{st:largetheta}, we have $\eps(p,q)\leq \eta$. So, $(X,Y)$ is not $(\mu,\eps(p,q))$-sparse in $\ol{G_{k}}$, which means $(X,Y)$ must be $(\mu,\eps(p,q))$-sparse in $G_k$. Let $(r,s)\in \mca{I}^+$ be the $\clx$-successor of $(p,q)$. Recall the $(p,q)$-refinement $(X_0,\ldots, X_{d_{p,q}}; Y_0,\ldots, Y_{d_{p,q}}; f_{p,q})$ of $(S_{r,s,p},S_{r,s,q})$ used to define $(S_{p,q,1},\ldots, S_{p,q,n})$. By \eqref{st:remainlarge}, we have
$S_{1,2,\ell}\subseteq S_{p,q,\ell}$ and
$\mu(S_{1,2,\ell})\geq 
   \pi_{{\delta}^c}(p,q)\mu(S_{p,q,\ell})$ for every $\ell\in \{p,q\}$. Therefore,
       $$\mu(X)\geq \pi_{\eps}(p,q)\mu(S_{1,2,p})\geq \left(\pi_{\delta^c}(p,q)\cdot \pi_{\eps}(p,q)\right)\mu(S_{p,q,p})= 
   \delta(p,q)\mu(X_{d_{p,q}})$$
       and 
         $$\mu(Y)\geq \pi_{\eps}(p,q)\mu(S_{1,2,q})\geq \left(\pi_{\delta^c}(p,q)\cdot \pi_{\eps}(p,q)\right)\mu(S_{p,q,q})= 
   \delta(p,q)\mu(Y_{d_{p,q}}).$$
   
\noindent In summary, $(X,Y)$ is a $(\mu,\eps(p,q))$-sparse pair in $G_k$ such that
\begin{itemize}[leftmargin=8mm, itemsep=1mm]
    \item $X\subseteq S_{1,2,p}\subseteq S_{p,q,p}=X_{d_{p,q}}$ with $\mu(X)\geq \delta(p,q)\mu(X_{d_{p,q}})$; and
    \item $Y\subseteq S_{1,2,q}\subseteq S_{p,q,q}=Y_{d_{p,q}}$ with  $\mu(Y)\geq \delta(p,q)\mu(Y_{d_{p,q}})$.
\end{itemize}  
Let $f:\set{1}{d_{p,q}+1}\to \set{1}{c}$ be the function with the rule
$$f|_{\set{1}{d_{p,q}}}=f_{p,q}\quad \text{and}\quad f(d_{p,q}+1)=k.$$
Then $f$ is an injection since $f_{p,q}$ is an injection and, by \eqref{st:missedcolor}, $k\notin \set{f_{p,q}(1)}{f_{p,q}(d_{p,q})}$. But now
$$(X_0,\ldots, X_{d_{p,q}},X; Y_0,\ldots, Y_{d_{p,q}},Y; f)$$
is a $(p,q)$-refinement of $(S_{r,s,p},S_{r,s,q})$ of depth $d_{p,q}+1$, a contradiction since $d_{p,q}$ is the $(p,q)$-tenacity of $(S_{r,s,p},S_{r,s,q})$. This proves \eqref{st:notsparse}.
\medskip

We also need the following:

\sta{\label{st:m>} $a\geq 1/\delta(1,n+1)$.}
For each $i\in \poi$, let $e_i=\log_{1/b}\delta(p,q)\geq 0$ and let $e'_i=\log_{1/b}\eps(p,q)\geq 0$, where $(p,q)$ is the $i$-th smallest element of $\poi \yl \poi$ with respect to $\clx$. We prove that
$e_i\leq (2c+2)^{i-2}$ for every $i\geq 2$; note that \eqref{st:m>} follows from this for $i=\binom{n}{2}+1$. From the recursions for $\delta,\eps$, it follows that for every $i\geq 2$, we have
$$e_i=\sum_{j=1}^{i-1}ce_j+\sum_{j=1}^{i-1}e'_j;$$
$$e'_i=\sum_{j=1}^{i}ce_j.$$
Note that $e_2=1$ because 
$$\delta(1,3)=\delta^c(1,2)\cdot \eps(1,2)=1/b.$$
Assume, inductively, that $e_i\leq (2c+2)^{i-2}$ for some $i\geq 2$. Then
\begin{align*}
 e_{i+1}&=\sum_{j=1}^{i}ce_j+\sum_{j=1}^{i}e'_j\\
 &=ce_i+\left(\sum_{j=1}^{i-1}ce_j+\sum_{j=1}^{i-1}e'_j\right)+e'_i\\
 &=(c+1)e_i+e'_i\\
 &=(2c+1)e_i+\sum_{j=1}^{i-1}ce_j\\
 &\leq (2c+1)e_i+\sum_{j=1}^{i-1}ce_j+\sum_{j=1}^{i-1}e'_j\\
 &=(2c+2)e_i.
\end{align*}
This proves \eqref{st:m>}.
\medskip

Now, by \eqref{st:remainlarge}, we have 
     $\mu(S_{1,2,p})\geq 
     \pi_{\delta^c}(1,n+1)\mu(S_{p})$ and $\mu(S_{1,2,q})\geq 
     \pi_{\delta^c}(1,n+1)\mu(S_{q})$, and by \eqref{st:m>}, we have $$\mu(S_{p}),\mu(S_{q})\geq \beta a\geq \beta/\delta(1,n+1).$$ Since $\pi_{\eps}(p,q)\geq \pi_{\eps}(1,n+1)$, it follows that
       $$\mu(X)\geq \pi_{\eps}(p,q)\mu(S_{1,2,p})\geq \left(\pi_{\delta^c}(1,n+1)\cdot \pi_{\eps}(1,n+1)\right)\mu(S_{p})= 
       \delta(1,n+1)\mu(S_{p})\geq \beta$$
       and likewise
       $$\mu(Y)\geq \pi_{\eps}(p,q)\mu(S_{1,2,q})\geq \left(\pi_{\delta^c}(1,n+1)\cdot \pi_{\eps}(1,n+1)\right)\mu(S_{q})= 
       \delta(1,n+1)\mu(S_{q})\geq \beta.$$
Also, by \eqref{st:notsparse}, $(X,Y)$ is $(\mu,\eta)$-sparse in $\ol{G_{k}}$. This completes the proof of \Cref{lem:maindense}.
\end{proof}

We may now prove our main result:

\main*
\begin{proof} Recall that if $h=1$, then $\tau=1$ and this follows from \Cref{thm:multiramsey} (with a minor adjustment to handle the overlapping colors, which we leave to the reader). Assume that $h\geq 2$ (and recall that $c,t\geq 2$). Let 
$$b=c^{2^{ht}};$$
let
$$a=b^{(2c+2)^{\tbinom{c^{cht}}{2}}};$$
and let 
$$\lambda=3ac^{cht}.$$

We show that:

\sta{\label{st:lambdabound} $b\geq \max\{c,2ht\}$ and $\lambda< c^{\tau}$.}
Clearly $b\geq c$, and since $c\geq 2$, it follows that $b=c^{2^{ht}}>2^{ht}=2\cdot 2^{ht-1}\geq 2ht$. It remains to show that $\lambda< c^{\tau}$. Note that $b= c^{2^{ht}}>c^{ht}>3$ because $c,t\geq 2$. It follows that
$$\lambda=3ac^{cht}<b^{(2c+2)^{\tbinom{c^{cht}}{2}}+c+1}.$$
Since $c\geq 2$, we have $(c+1)^2\geq 3c+3$, and so
$$\log_b\lambda< (2c+2)^{\tbinom{c^{cht}}{2}}+c+1<(3c+3)^{\tbinom{c^{cht}}{2}}\leq (c+1)^{c^{2cht}-c^{cht}}<2^{c^{3cht}-c^{2cht}};$$
where the last inequality holds because $c+1\leq 2^c<2^{cht}$. We deduce that
$$\log_c\lambda=\log_b\lambda \cdot \log_c b< 2^{c^{3cht}-c^{2cht}+ht}.$$
On the other hand, since $c\geq 2$, it follows that $c^{3ct}>2^t>t$, and from $c,h,t\geq 2$, it follows that $c^{(2h-3)ct}>2^{8h-12}>8h-12>h+1$. Therefore,
$$c^{2cht}-c^{3ct}=c^{3ct}(c^{(2h-3)ct}-1)>ht.$$
But now
$$\log_c\lambda< 2^{c^{3cht}-c^{2cht}+ht}<2^{c^{3cht}-c^{3ct}}=\tau.$$
This proves \eqref{st:lambdabound}.
\medskip

For each $i\in \set{1}{c}$, let $\mathscr{L}_i=(\mca{H}_{i,1},\ldots, \mca{H}_{i,t_i})$, where $t_i\in \set{1}{t}$. We need two definitions:

\begin{itemize}[leftmargin=8mm, itemsep=1mm]
    \item Let $s_1,\ldots, s_c\in \mathbb N$ such that $s_i\leq t_i$ for every $i\in \set{1}{c}$. We say that a subset $\Omega$ of $V$ is \textit{$(s_1,\ldots, s_c)$-admissible} if for every $i\in \set{1}{c}$, there is an $s_i$-subset $J_i$ of $\set{1}{t_i}$ such that $G_i[\Omega]$ has no $(\mca{H}_{i,j}:j\in J_i)$-complete induced subgraph.
    \item Let $\nu:2^{V}\to \mathbb N\cup\{0\}$ be the function with the following rule: Let $\nu(\varnothing)=0$, and for every nonempty $\Omega\subseteq V$, let $\nu(\Omega)\in \set{1}{|\Omega|}$ be minimum such that for some $\nu(\Omega)$ subsets $\Omega_1, \ldots, \Omega_{\nu(\Omega)}$ of $V$, we have
\begin{itemize}[leftmargin=8mm, itemsep=1mm]
    \item $\Omega\subseteq\Omega_1\cup \cdots\cup \Omega_{\nu(\Omega)}$; and
    \item for each $j\in \set{1}{\nu(\Omega)}$ with $|\Omega_j|>1$, there exists $i\in \set{1}{c}$ such that $G_i[\Omega_j]$ is $\mca{H}_{i,k}$-free for some $k\in \set{1}{t_i}$.
\end{itemize}

\noindent Note that this is well-defined because $\Omega=\bigcup_{x\in \Omega}\{x\}$ is finite. 
\end{itemize}

In the above terminology, the assumption of \ref{thm:main} says that $V$ is $(t_1,\ldots, t_c)$-admissible, and the conclusion of \ref{thm:main} says that $\nu(V)<n+1$. Instead, we prove the following strengthening that is tailored to an inductive argument:

    \sta{\label{st:induction} Let $s_1,\ldots, s_c\in \mathbb N$ such that $s_i\leq t_i$ for every $i\in \set{1}{c}$, and let $\Omega$ be a nonempty subset of $V$ which is $(s_1,\ldots, s_c)$-admissible. Then $\nu(\Omega)\leq \lambda^{(s_1+\cdots+s_c)-c}$.}

    The proof is by induction on $\sigma=s_1+\cdots+s_c\geq c$. Assume that $s_i=1$ for some $i\in \{1,\ldots,c\}$. Since $\Omega$ is $(s_1,\ldots, s_c)$-admissible, it follows that there exists $j\in \set{1}{t_i}$ such that $G_i[\Omega]$ is $\mca{H}_{i,j}$-free. In this case, $\nu(\Omega)=1\leq \lambda^{\sigma-c}$. Therefore, we may assume that $s_1,\ldots, s_c\geq 2$ (and so $\sigma\geq 2c$). Suppose for a contradiction that $\nu(\Omega)>\lambda^{\sigma-c}$. Let $\alpha=\lambda^{c-\sigma}$ and let $\beta=2/\lambda$. Then $\alpha,\beta\in (0,1]$ and
    $$\alpha+\beta a=\alpha +2a/\lambda=\alpha+2c^{-cht}/3.$$
    Note also that $\alpha<1/\lambda<c^{-cht}/3$. Thus,
    $$\alpha+\beta a< c^{-cht}.$$
    Since $\Omega$ is $(s_1,\ldots, s_c)$-admissible, for every $i\in \set{1}{c}$, there is an $s_i$-subset $J_i$ of $\set{1}{t_i}$ such that $G_i[\Omega]$ has no $(\mca{H}_{i,j}:j\in J_i)$-complete induced subgraph. For each $i\in \set{1}{c}$, fix a $(\mca{H}_{i,j}:j\in J_i)$-complete graph $H_i$; then, $|V(H_i)|\leq hs_i\leq ht_i\leq ht$, and $G_i$ is $H_i$-free. Let $\mu:2^{\Omega}\to [0,1]$ be the set-function on $\Omega$ with the rule:
    $$\mu:X\mapsto \nu(X)/\nu(\Omega).$$ 
    It is easy to check that $(\Omega,\mu)$ is a space. Also, $\mu(\{x\})=\nu(\{x\})/\nu(\Omega)<1/\lambda^{\sigma-c}=\alpha$ for every $x\in \Omega$; thus, $(\Omega,\mu)$ is $\alpha$-atomic. Recall that by \eqref{st:lambdabound}, we have $b\geq \max\{c,2ht\}$, and observe that 
    $$a> b^{(2c+2)^{\tbinom{c^{cht}}{2}-1}}.$$
   So we can apply \Cref{lem:maindense} (with $\eta=1/(2ht)$) to deduce that there exist $k\in \set{1}{c}$ and disjoint $X,Y\subseteq \Omega$ such that $\mu(X),\mu(Y)\geq \beta$ and $(X,Y)$ is $(\mu,1/(2ht))$-sparse in $\ol{G_k}$. It follows that
    $$\nu(X),\nu(Y)\geq \beta \nu(\Omega)>\beta\lambda^{\sigma-c}=2\lambda^{(\sigma-1)-c}.$$
    In particular, $\nu(X)>1$ (because $\sigma>c$), which implies that $|X|>1$ and for every $j\in \set{1}{t_k}$, there exists $X_j\subseteq X$ such that $G_k[X_j]$ is isomorphic to some graph in $\mca{H}_{k,j}$. Let $$X'=X_1\cup \cdots\cup X_{t_k}$$ and let 
    $$Y'=Y\setminus \left(\bigcup_{x\in X'}N_{\ol{G_k}}(x)\right)=Y\cap \left(\bigcap_{x\in X'}N_{G_k}(x)\right).$$
    Then $|X'|\leq ht_k\leq ht$, and every vertex in $X'$ is adjacent in $G_k$ to every vertex in $Y'$. Since $(X,Y)$ is $(\mu,1/(2ht))$-sparse in $\ol{G_k}$, it follows that 
    $$\nu(Y')\geq \nu(Y)- (|X'|/(2ht))\nu(Y)\geq \nu(Y)/2> \lambda^{(\sigma-1)-c}.$$
Let $s'_k=s_{k}-1$ and let $s'_{i}=s_{i}$ for all $i\in \set{1}{c}\setminus \{k\}$. Then $s'_1+\cdots+s'_c=\sigma-1$. Since $\nu(Y')>\lambda^{(\sigma-1)-c}$, by the inductive hypothesis, $Y'$ is not $(s'_1,\ldots, s'_c)$-admissible. It follows that there exists $i\in \set{1}{c}$ such that for every $s'_{i}$-subset $J'_{i}$ of $\set{1}{t_i}$, there exists $Y_{J'_{i}}\subseteq Y'$ such that $G_i[Y_{J'_{i}}]$ is a $(\mca{H}_{i,j}:j\in J'_{i})$-complete graph. There are now two possibilities:

\begin{itemize}[leftmargin=8mm, itemsep=1mm]
    \item Assume that $i\neq k$. Then $s'_{i}=s_{i}$, and for every $s_{i}$-subset $J_{i}$ of $\set{1}{t_i}$, $G_i[Y_{J_{i}}]$ is a $(\mca{H}_{i,j}:j\in J_{i})$-complete induced subgraph of $G_i[\Omega]$. 
    \item Assume that $i=k$. Then $s'_i=s_i-1$. Let $J_i$ be an $s_{i}$-subset of $\set{1}{t_i}$, and pick $j_0\in J_i$. Then $J'_i=J_i\setminus \{j_0\}$ is an $s'_i$-subset of $\set{1}{t_i}$, and so $G_i[Y_{J'_i}]$ is a $(\mca{H}_{i,j}:j\in J'_i)$-complete graph. Recall (by $i=k$) that $G_i[X_{j_0}]$ is isomorphic to a graph in $\mca{H}_{i,j_0}$, and also every vertex in $X_{j_0}\subseteq X'$ is adjacent in $G_i$ to every vertex in $Y_{J'_i}\subseteq Y'$. So, $G_i[X_{j_0}\cup Y_{J'_i}]$ is a $(\mca{H}_{i,j}:j\in J_i)$-complete induced subgraph of $G_i[\Omega]$.
\end{itemize}
 In either case, for every $s_{i}$-subset $J_{i}$ of $\set{1}{t_i}$, we find an induced subgraph of $G_i[\Omega]$ which is a $(\mca{H}_{i,j}:j\in J_{i})$-complete graph, a contradiction to the assumption that $\Omega$ is $(s_1,\ldots, s_c)$-admissible. This proves \eqref{st:induction}.
\medskip

Since $V$ is $(t_1,\ldots, t_c)$-admissible, by \eqref{st:induction}, we have $\nu(V)\leq \lambda^{t_1+\cdots+t_c-c}\leq \lambda^{c(t-1)}$, and so by \eqref{st:lambdabound}, we have $\nu(V)<c^{\tau c(t-1)}=n+1$. This completes the proof of \Cref{thm:main}.
\end{proof}

\bibliographystyle{abbrv}
\bibliography{ref}
\newpage

\section*{Appendix: Proof of \Cref{thm:better-hit-vs-anti}}
\phantomsection
\label{appendix}

For completeness, let us first give a proof of the explicit bound in \Cref{thm:multiramsey}:

\setcounter{theorem}{0}
\setcounter{section}{1}
\begin{theorem}[Ramsey \cite{multiramsey}]
For all $c,t\in \mathbb N$ with $c\geq 2$ and every integer $n\geq c^{c(t-1)}$, every $c$-coloring of $E(K_n)$ contains a monochromatic copy of $K_t$.
\end{theorem}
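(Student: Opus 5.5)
We prove the statement by induction on $t$, in the standard Erd\H{o}s--Szekeres style, but we track the number of remaining ``budget'' steps per color. For a $c$-coloring of $E(K_N)$, we show that if $N\geq c^{\sum_{i=1}^c (t_i-1)}$ for positive integers $t_1,\ldots,t_c$, then for some color $i$ there is a copy of $K_{t_i}$ all of whose edges have color $i$. Taking $t_1=\cdots=t_c=t$ gives the bound $c^{c(t-1)}$, which is what the statement requires.

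The induction is on $\sigma=t_1+\cdots+t_c$. For the base, suppose some $t_i=1$. A single vertex is then a monochromatic $K_1$ in color $i$, and $N\geq 1$ holds trivially, so we are done. Now assume $t_1,\ldots,t_c\geq 2$. Fix a vertex $v$ and partition the other $N-1$ vertices by the color of their edge to $v$, obtaining classes $N_1,\ldots,N_c$. Since $N-1\geq c^{\sigma-c}-1$, some class $N_k$ satisfies
$$|N_k|\geq \left\lceil \frac{c^{\sigma-c}-1}{c}\right\rceil = c^{\sigma-c-1},$$
where the equality holds because $c^{\sigma-c}/c$ is an integer (using $\sigma\geq c+1$) and $(c^{\sigma-c}-1)/c$ lies strictly between $c^{\sigma-c-1}-1$ and $c^{\sigma-c-1}$.

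Next we apply the induction hypothesis to $N_k$ with $t_k$ replaced by $t_k-1$. This gives either a monochromatic $K_{t_i}$ in some color $i\neq k$, in which case we are done, or a color-$k$ copy of $K_{t_k-1}$ inside $N_k$. In the latter case, adding $v$ yields a color-$k$ copy of $K_{t_k}$, since every edge from $v$ to $N_k$ has color $k$.

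The only point that needs care is the counting step showing $N-1\geq c\cdot(c^{\sigma-c-1}-1)+1$, so that the pigeonhole principle applies. This reduces to $c^{\sigma-c}-1\geq c^{\sigma-c}-c+1$, which holds since $c\geq 2$. Finally, we note that $K_n$ with $n\geq c^{c(t-1)}$ contains a $K_N$ with $N=c^{c(t-1)}$, so the general statement follows from this case.
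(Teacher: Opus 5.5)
Your proof is correct. The identity $\lceil (c^{\sigma-c}-1)/c\rceil=c^{\sigma-c-1}$ holds because $c\geq 2$, and $|N_k|\geq c^{(\sigma-1)-c}$ is exactly what the induction hypothesis needs.

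Your basic move is the same as the paper's: pick a vertex and pass to a large monochromatic neighborhood. The bookkeeping is different, though. The paper first proves a $t$-independent lemma: if $p\geq c^{q-1}$, every $c$-coloring of $E(K_p)$ contains vertices $x_1,\ldots,x_q$ such that, for each $j$, all edges $x_ix_j$ with $i<j$ share one color $\ell_{x_j}$. This is proved by induction on $q$, recursing into a largest monochromatic neighborhood of the last vertex. The paper then takes $q=c(t-1)+1$ and pigeonholes on the colors $\ell_{x_j}$ to find $t$ vertices with a common color. You instead charge each step to its color immediately by decrementing $t_k$, so the final pigeonhole is absorbed into the induction on $\sigma=t_1+\cdots+t_c$.

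The two routes give identical bounds, and neither is more general. The paper's lemma with $q=\sum_i(t_i-1)+1$ yields your off-diagonal bound $c^{\sum_i(t_i-1)}$ just as easily. The paper's version buys a clean split into a structural lemma plus a one-line count. Yours buys something else: it is exactly the template of the induction on $s_1+\cdots+s_c$ with bound $\lambda^{\sigma-c}$ in the proof of \Cref{thm:main}. There, one likewise finds a small set $X'$, passes to its common $G_k$-neighborhood $Y'$, and decrements $s_k$. So your argument is the natural $h=1$ shadow of the paper's main proof.
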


\begin{proof}[Proof of \Cref{thm:multiramsey}]
First, we show that:

\sta{\label{st:backedgecolor} For all $p,q\in \mathbb N$ with $p\geq c^{q-1}$ and every $c$-coloring of $E(K_p)$, there are $q$ vertices $x_1,\ldots,x_q\in V(K_p)$ such that for each $j\in \{1,\ldots,q\}$, the edges $(x_ix_j:i\in \{1,\ldots,j-1\})$ are colored the same.}

The proof is by induction on $q$, and the case $q=1$ is clear. Assume that $q\geq 2$. Let $v_1,\ldots,v_p$ be an enumeration of the vertices of $K_p$. Since $c\geq 2$, it follows that $p-1\geq c^{q-1}-1>c(c^{q-2}-1)$, and so there is a $c^{q-2}$-subset $X$ of $\{v_1,\ldots,v_{p-1}\}$ such that the edges $(xv_p:x\in X)$ are all colored the same. Also, by the inductive hypothesis, there are $q-1$ vertices $x_1,\ldots,x_{q-1}\in X$ such that for each $j\in \{1,\ldots,q-1\}$, the edges $(x_ix_j:i\in \{1,\ldots,j-1\})$ are colored the same. Now $x_1,\ldots,x_{q-1},v_p$ are the desired $q$ vertices of $K_p$. This proves \eqref{st:backedgecolor}.
\medskip

Let $q=c(t-1)+1$. By \eqref{st:backedgecolor}, for every $n\geq c^{c(t-1)}=c^{q-1}$ and every $c$-coloring of $E(K_n)$, there are $q$ vertices $x_1,\ldots,x_q\in V(K_n)$ such that for every $j\in \{1,\ldots,q\}$, the edges $(x_ix_j:i\in \{1,\ldots,j-1\})$ are all colored with the same color $\ell_{x_j}$. Since $q>c(t-1)$, it follows that there is a $t$-subset $T$ of $\{x_1,\ldots,x_q\}$ as well as a color $\ell$ such that $\ell_x=\ell$ for all $x\in T$. But now all edges of $K_T$ are colored with the same color $\ell$, as desired.
\end{proof}

We now prove \Cref{thm:better-hit-vs-anti}:

\betteranti*
\begin{proof}
   Assume that $\mca{S}$ has no hitting set $X\subseteq V(G)$ with $\alpha(G[X])\leq bcn$. For each $i\in \{1,\ldots,c-1\}$, let $$\mca{S}_i=\{S\in \mca{S}: |S|=i\}.$$
   Then $\mca{S}=\mca{S}_1\cup\cdots\cup\mca{S}_{c-1}$, and therefore:
   
   \sta{\label{st:nonempty} There exists $k\in \{1,\ldots,c-1\}$ such that $\mca{S}_k$ has no hitting set $X\subseteq V(G)$ with $\alpha(G[X])\leq bn$.}
   
   From here on, let $k\in \{1,\ldots,c-1\}$ be as given by \eqref{st:nonempty}. Note that $\mca{S}_k\neq \varnothing$, as otherwise $X=\varnothing$ would be a hitting set for $\mca{S}_k$. For each $S\in \mca{S}_k$, fix an enumeration $v_{1,S},\ldots,v_{k,S}$ of the vertices in $S$. For each $i\in \{1,\ldots,k\}$, let $G_i$ be the graph with $V(G_i)=\mca{S}_k$ such that distinct sets $S,T\in \mca{S}_k$ are adjacent in $G_i$ if and only if either $v_{i,S}=v_{i,T}$, or $v_{i,S}$ and $v_{i,T}$ are adjacent in $G$. For every $i\in \{k+1,\ldots,c-1\}$, let $G_i$ be the graph with $V(G_i)=\mca{S}_k$ and $E(G_i)=\varnothing$ (note that there may be no such $i$). We show that:

\sta{\label{st:bipfree} $G_i$ is $K_{b,b}$-free for every $i\in \{1,\ldots,c-1\}$.}

This is immediate for $i>k$. Suppose that for some $i\in \{1,\ldots,k\}$, there are two disjoint $b$-subsets $\mca{B}_1,\mca{B}_2$ of $\mca{S}_k$ such that the sets in $\mca{B}_j$ are pairwise nonadjacent in $G_i$ for every $j\in \{1,2\}$, and every $B_1\in \mca{B}_1$ is adjacent in $G_i$ to every $B_2\in \mca{B}_2$. Since $G$ is $K_{b,b}$-free, there exist $(B_1,B_2)\in \mca{B}_1\times \mca{B}_2$ for which $v_{i,B_1}=v_{i,B_2}$. Since $b\geq 2$, we can choose $B_3\in \mca{B}_2\setminus \{B_2\}$. It follows that $v_{i,B_2}$ and $v_{i,B_3}$ are distinct and nonadjacent in $G$ because $B_2$ and $B_3$ are distinct and nonadjacent in $G_i$. At the same time, $v_{i,B_2}$ and $v_{i,B_3}$ are either the same or adjacent in $G$ because $v_{i,B_1}=v_{i,B_2}$ and $B_1,B_3$ are adjacent in $G_i$, a contradiction. This proves \eqref{st:bipfree}.

\sta{\label{st:commonanti} There is a $t$-subset $\mca{T}$ of $\mca{S}_k$ such that for all $i\in \{1,\ldots,k\}$, the vertices $(v_{i,T}:T\in \mca{T})$ are pairwise distinct and nonadjacent in $G$.}

Let $G_c$ be the graph with $V(G_c)=\mca{S}_k$ and
$$E(G_c)=E(K_{\mca{S}_k})\setminus (E(G_1)\cup\cdots\cup E(G_{c-1}))=E(K_{\mca{S}_k})\setminus (E(G_1)\cup\cdots\cup E(G_k)).$$
It suffices to show that $G_c$ contains a copy of $K_t$. Suppose not. Note that $(G_1,\ldots,G_c)$ is a $c$-multicoloring of $E(K_{\mca{S}_k})$. By \eqref{st:bipfree}, $G_1,\ldots,G_{c-1}$ are all $K_{b,b}$-free; note that $K_{b,b}$ is the complete join of $2<t$ copies of $\ol{K_{b}}$. Also, $G_c$ is $K_t$-free, and note that $K_t$ is the complete join of $t$ copies of $K_1$. Therefore, we can apply \Cref{thm:main} to obtain a cover of $\mca{S}_k$ by $n$ sets $\mca{T}_1,\ldots,\mca{T}_n\subseteq \mca{S}_k$ such that for every $j\in \{1,\ldots,n\}$, one of the following holds for some $i_j\in \{1,\ldots,c\}$:
\begin{itemize}[leftmargin=8mm, itemsep=1mm]
\item $G_{i_j}[\mca{T}_j]$ is isomorphic to $K_1$ (this is the case where $|\mca{T}_j|=1$, and $i_j\in \set{1}{c}$ can be chosen freely).
    \item $i_j\in \{1,\ldots,k\}$ and $G_{i_j}[\mca{T}_j]$ is $\ol{K_{b}}$-free.
    \item $i_j\in \{k+1,\ldots,c-1\}$ and $G_{i_j}[\mca{T}_j]$ is $\ol{K_{b}}$-free.
    \item $i_j=c$ and $G_{i_j}[\mca{T}_j]$ is $K_1$-free; that is, $\mca{T}_j=\varnothing$.
\end{itemize}
For each $j\in \{1,\ldots,n\}$, let $X_j=\{v_{i_j,T}:T\in \mca{T}_j\}$. If the first outcome holds for $j$, then $|X_j|\leq |\mca{T}_j|=1$. If the second outcome holds, then $G[X_j]$ is $\ol{K_{b}}$-free. If the third outcome holds, then $|X_j|\leq |\mca{T}_j|\leq b$ (because $E(G_{i_j})=\varnothing$). If the fourth outcome holds, then $\mca{T}_j=\varnothing$, and so $X_j=\varnothing$. We deduce that $\alpha(G[X_j])\leq b$ for every $j\in \{1,\ldots,n\}$. Let $X=X_1\cup \cdots \cup X_n$. Then $\alpha(G[X])\leq bn$. Moreover, since $\mca{S}_k=\mca{T}_1\cup\cdots\cup \mca{T}_n$, for each $S\in \mca{S}_k$, there exists $j\in \set{1}{n}$ such that $S\in \mca{T}_j$, and so $v_{i_j,S}\in S\cap X_j\subseteq X$. But then $X$ is a hitting set of $\mca{S}_k$ with $\alpha(G[X])\leq bn$, a contradiction to \eqref{st:nonempty}. This proves \eqref{st:commonanti}.
\medskip

Let $\mca{T}$ be as given by \eqref{st:commonanti}. For every $e\in E(K_{\mca{T}})$ with ends $T_1,T_2\in \mca{T}$, let
$$I_e=\{(i_1,i_2): i_1,i_2\in \{1,\ldots,k\},\ v_{i_1,T_1}=v_{i_2,T_2}\};$$
let
$$J_e=\{(i_1,i_2): i_1,i_2\in \{1,\ldots,k\},\ v_{i_1,T_1}v_{i_2,T_2}\in E(G)\};$$
and color $e$ with the pair $(I_e,J_e)$. This yields a coloring of $E(K_{\mca{T}})$ with $2^{2k^2}<2^{2c^2}$ colors. Since $$|\mca{T}|=t=\big(2^{2c^2}\big)^{2^{2c^2}(2b+d-1)}>\big(2^{2k^2}\big)^{2^{2k^2}(2b+d-1)};$$
it follows from \Cref{thm:multiramsey} that there is a $(2b+d)$-subset $\mca{U}$ of $\mca{T}$ as well as subsets $I,J$ of $\{1,\ldots,k\}\times \{1,\ldots,k\}$ such that for every $e\in E(K_{\mca{U}})$, we have 
$$(I_e,J_e)=(I,J).$$

We further claim that:

\sta{\label{st:empty} $I=J=\varnothing$.}

Assume that $(i_1,i_2)\in I$. Choose three distinct sets $T_1,T_2,T_3\in \mca{U}$; this is possible because $|\mca{U}|=2b+d\geq 3$. Since $(i_1,i_2)\in I$, we have $v_{i_1,T_1}=v_{i_2,T_3}$ and $v_{i_1,T_2}=v_{i_2,T_3}$. But now $v_{i_1,T_1}=v_{i_1,T_2}$, a contradiction to \eqref{st:commonanti}. Assume that $(i_1,i_2)\in J$. Since $|\mca{U}|>2b$, we may choose two disjoint $b$-subsets $\mca{B}_1,\mca{B}_2$ of $\mca{U}$. By \eqref{st:commonanti}, for each $j\in\{1,2\}$, the vertices $(v_{i_j,B}:B\in \mca{B}_j)$ are pairwise distinct and nonadjacent in $G$. Also, for each $(B_1,B_2)\in \mca{B}_1\times \mca{B}_2$, since $(i_1,i_2)\in J$, it follows that $v_{i_1,B_1}v_{i_2,B_2}\in E(G)$, and in particular $v_{i_1,B_1}\neq v_{i_2,B_2}$. But then $G[\{v_{i_1,B_1}:B_1\in \mca{B}_1\}\cup \{v_{i_2,B_2}:B_2\in \mca{B}_2\}]$ is isomorphic to $K_{b,b}$, a contradiction. This proves \eqref{st:empty}.
\medskip

By \eqref{st:empty}, the sets in $\mca{U}$ are pairwise anticomplete in $G$. Recall also that $|\mca{U}|>d$. This completes the proof of \Cref{thm:better-hit-vs-anti}.
\end{proof}
\end{document}